\newcommand{\W}{W}
\newcommand{\R}{\mathbb{R}}
\newcommand{\E}{\mathbb{E}}
\renewcommand{\P}{\mathbb{P}}
\newcommand{\Lcal}{\mathcal{L}}
\newcommand{\N}{\mathbb{N}}
\begin{document}



\section{Introduction}

Based on classical results in \cite{sznitman1991topics}, propagation of chaos and laws of large numbers for empirical processes have recently attracted significant attention mostly for mean-field interacting diffusion models (see e.g. \cite{chaintron2022propagation,daipra2017} and references therein). In the context of interacting particle systems (IPS), propagation of chaos has been studied for the evolution of tagged particle locations on regular lattices \cite{rezakhanlou1994evolution,rezakhanlou1994propagation} and for single-site dynamics in mean-field models \cite{grosskinsky2019derivation}, with recent results also for sparse random graphs \cite{ramanan2023interacting}. 
This note is based on results in \cite{grosskinsky2019derivation} which provides a law of large numbers for empirical processes with a connection to rate equations studied in the context of cluster aggregation models \cite{ben2003exchange,schlichting2020exchange}.

We consider the evolution of size-biased empirical measures, which is a useful tool to study the dynamics of condensing IPS with unbounded occupation numbers, such as zero-range \cite{godreche2016coarsening,jatuviriyapornchai2016coarsening} or inclusion processes \cite{chleboun2023sizebiased}. The dynamics of cluster formation in condensing IPS has attracted significant recent research interest \cite{armendariz2023fluid,beltran2015martingaleproblem}, also in the context of metastability (see e.g. \cite{kim2021condensation,landim2023resolvent} and references therein). We show that the occupation number on a tagged particle location in the mean-field limit converges to a time-inhomogeneous Markov process with non-linear master equation given by the law of large numbers for size-biased empirical processes. 
  \textcolor{black}{This provides a new interpretation of the limiting dynamics of size-biased empirical measures, in analogy to classical propagation of chaos \cite{grosskinsky2019derivation,sznitman1991topics} which links the dynamics of unbiased empirical measures with that of occupation numbers on a fixed site. Note also that in contrast to the occupation number, the location of the tagged particle does not converge to a limiting process in the mean-field limit we consider here.} 
Our main assumption is a bound on the jump rates by a bi-linear function of departure and target site occupation, which includes the above mentioned examples of condensing systems. In such models, higher order correlation functions diverge with time, so in contrast to recent results with uniform-in-time estimates \cite{lacker2023sharp} our results can be only local in time.

\section{Notation and main result \label{sec:notation}}

\subsection{Mathematical setting}

We consider stochastic particle systems $({\eta}(t):t>0)$ on finite lattices $\Lambda$ of size $|\Lambda|=L$. Configurations are denoted by ${\eta} =(\eta_x :x\in\Lambda)$ where $\eta_x \in\mathbb{\N}_0$ is the number of particles on site $x$. We consider systems with a fixed number of particles $N=\sum_{x\in\Lambda} \eta_x$ and the state space of all such configurations is denoted by $E_{L,N}\subset \N_0^\Lambda$. 
The dynamics of the process is defined by the infinitesimal generator 
\begin{equation}
	\label{eq:GenMis}
	(\mathcal{L}g)({\eta})=\sum_{x,y\in\Lambda}q(x,y)c(\eta_{x},\eta_{y})(g({\eta}^{x\rightarrow y})-g({\eta}))\ ,\quad g\in C_b (E_{L,N})\ .
\end{equation}
Here, the usual notation ${\eta}^{x\rightarrow y}$ indicates a configuration where one particle has moved from site $x$ to $y$, i.e. $\eta_{z}^{x\rightarrow y}=\eta_{z}-\delta_{z,x} +\delta_{z,y}$, and $\delta$ is the Kronecker delta. 
Since $E_{L,N}$ is finite, the generator \eqref{eq:GenMis} is defined for all bounded, continuous test functions $g\in C_{b}(E_{L,N})$. For a general discussion and the construction of the dynamics on infinite lattices see e.g.\ \cite{andjel2021,balazs2007existence}. 

To ensure that the process is non-degenerate, the jump rates satisfy
\begin{equation}\label{cassum}
	\left\{ \begin{array}{cl}
		c(0,l)=0\;&\mbox{for all }l\geq 0\\
		c(k,l)>0\;&\mbox{for all }k>0\;\mbox{and }l\geq 0.
	\end{array} \right.
\end{equation}
Our main further assumption on the dynamics is that the rates grow sublinearly, in the sense that they are bounded by a bilinear function
\begin{equation}
	\label{eq:lip}
	c(k,l)\leq C k (1+l)\quad\mbox{for constant }C >0\ .
\end{equation}
We focus on complete graph dynamics, i.e. $q(x,y)=1/(L-1)$ for all $x \neq y$, and under the above conditions the process is irreducible on $E_{L,N}$ and
\begin{equation}\label{eq:consmass}
	\sum_{x\in\Lambda} \eta_x (t)\equiv N\quad\mbox{is the only conserved quantity}\ .
\end{equation}

\noindent 
To follow the location $(X(t):t\geq 0)$ of a tagged particle, we extend the state space to $E:=E_{L,N} \times \Lambda$ and states $(\eta ,x)\in E$ describe the particle configuration $\eta\in E_{L,N}$ and location $x\in\Lambda$ of the tagged particle. 
In the following, we denote by $\mathbb{P}^L$ and $\mathbb{E}^L$ the law and expectation on the path space $\Omega=D_{[0,\infty)}(E)$ of the joint process $\big( (\eta(t),X(t)): \;t\geq 0\big)$. 
As usual, we use the Borel $\sigma$-algebra for the discrete product topology on $E$, and the smallest $\sigma$-algebra on $\Omega$ such that $\omega\mapsto(\eta_t(\omega),X_t(\omega))$ is measurable for all $t\geq 0$. 	
The joint process is Markov and its evolution is described by the infinitesimal generator
\begin{multline}\label{taggedsyst}
	\tilde{\Lcal} G(\eta,x)= \sum_{y,z\in\Lambda}\frac{1}{L-1} c(\eta_{y},\eta_{z})(G({\eta}^{y\rightarrow z},x)-G({\eta},x))(1-\delta_{xy}) \\
	+ \sum_{z\in\Lambda} \frac{1}{L-1}c(\eta_x,\eta_z) \left[ \frac{\eta_x-1}{\eta_x}\left( G(\eta^{x\rightarrow z},x)-G(\eta,x)\right)+ \frac{1}{\eta_x}\left( G(\eta^{x\rightarrow z},z)-G(\eta,x) \right) \right]
\end{multline}
for all bounded continuous functions $G\in C_b (E)$. 
We consider the empirical processes $t\mapsto F_k^L ({\eta}(t))$ with
\begin{equation}
	F_{k}^L ({\eta}):=\frac{1}{L}\sum_{x\in\Lambda}\delta_{\eta_{x},k} \in [0,1]\ ,\quad k\geq 0\ , \label{fk}
\end{equation}
counting the fraction of lattice sites for each occupation number $k\geq 0$.

For our main result we will consider the thermodynamic limit with density $\rho$, i.e.
\begin{equation}\label{thermo}
	L\to\infty ,\ N=N_L \to\infty \quad\mbox{such that}\quad N/L\to\rho\geq 0\ .
\end{equation} 
Under condition \eqref{thermo}, the sequence $N/L$ is bounded from above by a constant and for simplicity and without loss of generality, we assume that
\begin{equation}\label{thermob}
	 N/L\leq \rho\quad\mbox{for all }L\geq 2\ .
\end{equation} 
For the sequence (in $L$) of initial conditions $(\eta(0) ,X(0))$ we first require the minimal condition that there exists a fixed probability distribution $f (0)$ on $\N_0$ with finite moments
\begin{equation}\label{initialcon0}
	m_1(0) := \sum_{k} kf_k(0) =\rho < \infty  \quad\text{and}\quad   m_2(0) := \sum_{k\geq 1}k^2 f_k(0) < \infty,
\end{equation} 
such that we have a weak law of large numbers
\begin{equation}\label{initialcon0b}
	F_k^L (\eta(0)) \stackrel{d}{\longrightarrow} f_k (0) \quad\text{as }L\to\infty ,\ \text{for all } k \geq 0.
\end{equation}

We need further regularity assumptions on the initial conditions, namely a uniform bound of second and third moments, for some fixed $\alpha_2, \alpha_3 >0$
\begin{align}
	\label{initialcon0c}
	 \E \Big[\frac{1}{L}\sum_{x \in \Lambda}\eta_x^2(0) \Big]\leq \alpha_2 \quad\mbox{and}\quad \E \Big[\frac{1}{L}\sum_{x \in \Lambda}\eta_x^3(0)\Big]\leq \alpha_3 \quad\mbox{for all }L\geq 2\ .
\end{align}
Note that (\ref{thermob}) and conservation of mass \eqref{eq:consmass} imply for the first moment that
\begin{equation}
	\label{alpha}
	\frac{1}{L}\sum_{x\in\Lambda} \eta_x (t)=\sum_{k\geq 0}k F_k^L (\eta (t))=\frac{N}{L}\leq \rho \ , \quad\P^L -a.s.\ \mbox{for all }t\geq 0\mbox{ and }L\geq 2\ .
\end{equation}
We assume that $N-1$ particles are distributed on the lattice according to some initial conditions satisfying \eqref{initialcon0}, \eqref{initialcon0b}, \eqref{initialcon0c} and the $N$-th particle (the tagged one) is located on position $X(0)$, increasing the value of $\eta_{X(0)} (0)$ by $1$ such that
\begin{equation}\label{initialcon0d}
	\E^L \left[ \eta_{X(0)}^2(0) \right]<   {\alpha}_4 \quad\text{holds for some fixed }\alpha_4 >0 \text{ and all }L\geq 2\ .
\end{equation}

For example, if we distribute $N-1$ particles uniformly, independently on $\Lambda$, \eqref{initialcon0}, \eqref{initialcon0b} are satisfied with Poisson distribution $f(0)$, and  condition \eqref{initialcon0c} is satisfied for all $L\geq 2$. 
There are various ways to then choose the initial position of the tagged particle such that 
\eqref{initialcon0d} is satisfied. We could pick a fixed site (e.g. $X(0)=1$) or select one uniformly at random. On the other hand, selecting a site with the maximum occupation number would lead to logarithmic growth with respect to $L$ of $\eta_{X(0)}(0)$, violating \eqref{initialcon0d}.

\subsection{A law of large numbers for empirical processes}

A law of large numbers for the empirical process \eqref{fk} was established in \cite{grosskinsky2019derivation}. 

\begin{theorem}
	\label{thmfactorization}
	Consider a process with generator \eqref{eq:GenMis} on the complete graph with sublinear rates (\ref{eq:lip}) and initial conditions satisfying \eqref{initialcon0}, \eqref{initialcon0b} and the second moment condition in \eqref{initialcon0c}. 
	Then we have in the thermodynamic limit \eqref{thermo} for any $\rho >0$ and any Lipschitz function $h:\N_0\to\R$,
	\begin{equation}
		\Big(\sum_{k\geq 0} F_k^L (\eta(t) )\, h(k) :t \geq0\Big)\to \Big(\sum_{k\geq 0} f_k (t)\, h(k) :t\geq0\Big)\quad\mbox{weakly on }    {D_{[0,\infty)}(E)},
	\end{equation}
	where 
	$f(t)=(f_k(t):k\in \mathbb{N}_0)$ is the unique global solution to the \textbf{mean-field equation}
	\begin{align}
		\label{eq:mis_diff_f_k}
		\frac{df_{k}(t)}{dt}
		&=\sum_{l\geq 0}c(k+1,l)f_l(t)f_{k+1}(t)+\sum_{l\geq 1} c(l,k-1) f_l(t)f_{k-1}(t) \nonumber \\
		&\quad-\bigg(\sum_{l\geq 0}c(k,l)f_l(t)+\sum_{l\geq 0} c(l,k)f_l(t)\bigg)f_{k}(t)\quad\mbox{for all }k\geq 0,
	\end{align}
	with initial condition $f(0)$ given by (\ref{initialcon0b}). 
	Here we use the convention $f_{-1}(t)\equiv 0$ for all $t\geq 0$ and recall that $c(0,l)=0$ for all $l\geq 0$.
\end{theorem}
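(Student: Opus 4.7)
This LLN is established in \cite{grosskinsky2019derivation}, and I would reproduce it via the standard Dynkin--martingale method for mean-field IPS. For Lipschitz $h:\N_0\to\R$, set $G_h(\eta):=\frac{1}{L}\sum_{x\in\Lambda}h(\eta_x)=\sum_{k\geq 0}F_k^L(\eta)\,h(k)$ and write the decomposition
\begin{equation*}
	G_h(\eta(t))=G_h(\eta(0))+\int_0^t (\Lcal G_h)(\eta(s))\,ds+M^L_h(t)
\end{equation*}
with $M^L_h$ a mean-zero $\P^L$-martingale. The proof then requires four ingredients: (i) $L^2$-vanishing of $M^L_h$; (ii) tightness of the real-valued processes $(G_h\circ\eta)_{L}$ in $D_{[0,\infty)}(\R)$; (iii) identification of any limit point as satisfying the integrated form of \eqref{eq:mis_diff_f_k} tested against $h$; and (iv) uniqueness of solutions to the mean-field equation.

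A direct computation of the generator from \eqref{eq:GenMis} yields, with $\phi_h(k,l):=h(k-1)-h(k)+h(l+1)-h(l)$,
\begin{equation*}
	(\Lcal G_h)(\eta)=\frac{L}{L-1}\sum_{k,l\geq 0}c(k,l)\,\phi_h(k,l)\,F_k^L(\eta)F_l^L(\eta)-\frac{1}{L(L-1)}\sum_{x\in\Lambda}c(\eta_x,\eta_x)\phi_h(\eta_x,\eta_x),
\end{equation*}
the subtracted piece being the diagonal $x=y$ contribution (absent from \eqref{eq:GenMis} since $q(x,x)=0$), which is $O(1/L)$ under second-moment control. Testing \eqref{eq:mis_diff_f_k} against $h$ and re-indexing produces exactly the same bilinear form with $F^L$ replaced by $f$, so the drift is already a continuous functional of $F^L$ and no separate propagation-of-chaos estimate is required beyond this algebraic factorization.

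The main obstacle is that the bilinear bound \eqref{eq:lip} forces both drift and quadratic variation to involve the non-conserved second moment $m_2^L(t):=\frac{1}{L}\sum_x\eta_x^2(t)$. To close the estimates one must propagate the initial bound \eqref{initialcon0c} in time: applying Dynkin's formula to a truncation of $k\mapsto k^2$ and using \eqref{eq:lip} produces a Gronwall-type inequality
\begin{equation*}
	\E^L\bigl[m_2^L(t)\bigr]\leq \alpha_2+C\int_0^t \bigl(1+\E^L[m_2^L(s)]\bigr)\,ds,
\end{equation*}
hence a bound $\bar\alpha_2(T)$ uniform in $L$ on every compact $[0,T]$. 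This controls the drift, since $|(\Lcal G_h)(\eta)|\leq C\,\mathrm{Lip}(h)(1+m_2^L)$ gives equicontinuity and hence tightness (ii) via Aldous' criterion, and it controls the predictable bracket $\langle M^L_h\rangle_t\leq C\,\mathrm{Lip}(h)^2(1+\bar\alpha_2(T))\,t/L\to 0$, giving (i).

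Identification (iii) follows by a continuous-mapping argument using the factorization above (the bilinear map $F\mapsto \sum c(k,l)\phi_h(k,l)F_k F_l$ is continuous on the set of measures with bounded second moment), and uniqueness (iv) follows from a Gronwall estimate on $f^{(1)}-f^{(2)}$ in a weighted norm controlling $\sum_k k^2|f_k^{(1)}-f_k^{(2)}|$, which absorbs the linear growth of $c$ in its second argument. The bulk of the technical effort throughout is the moment bookkeeping needed to handle the unbounded rates and unbounded Lipschitz test functions; by contrast, the algebraic factorization of the drift in terms of $F^L\otimes F^L$ is essentially immediate.
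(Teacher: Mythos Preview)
Your approach is correct and matches the paper's structure (Dynkin decomposition, Aldous tightness, vanishing carr\'e du champ, with identification and uniqueness deferred to \cite{grosskinsky2019derivation}). The one substantive difference is that you claim the bilinear rate bound \eqref{eq:lip} forces the drift and quadratic variation to involve the non-conserved second moment, and therefore insert a Gronwall step to propagate \eqref{initialcon0c} in time. This is unnecessary: since $|\phi_h|\leq 2\|h\|_{\mathrm{Lip}}$ and $c(k,l)\leq Ck(1+l)$, the bilinear part of $\mathcal{L}G_h$ is bounded by $2C\|h\|_{\mathrm{Lip}}\bigl(\sum_k kF_k^L\bigr)\bigl(\sum_l(1+l)F_l^L\bigr)=2C\|h\|_{\mathrm{Lip}}\,\tfrac{N}{L}\bigl(1+\tfrac{N}{L}\bigr)$, which is controlled uniformly by mass conservation \eqref{alpha} alone; the carr\'e du champ picks up an extra factor $1/L$ and is bounded by $8C\|h\|_{\mathrm{Lip}}^2\rho(1+\rho)/L$ with the same argument. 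The paper exploits this to establish tightness with \emph{no} assumption whatsoever on the initial distribution, so your Gronwall detour, while valid, is superfluous at this stage; the second-moment condition in \eqref{initialcon0c} is needed only for the identification/uniqueness part handled in \cite{grosskinsky2019derivation}.
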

Notice that in \cite{grosskinsky2019derivation}, this result was established for bounded functions $h:\N_0\to\R$ and more restrictive assumptions on initial conditions. The proof for Lipschitz functions $h:\N_0\to\R$ as stated above is included in Appendix \ref{app:a}.

The nonlinear equations \eqref{eq:mis_diff_f_k} can be written as
\[
\frac{df_{k}(t)}{dt} =\mu_{k+1} (t)\, f_{k+1}(t)+\beta_{k-1} (t)\, f_{k-1}(t) -\Big(  {\mu_k (t)+\beta_k (t)}\Big) f_{k}(t)\ ,\quad k\geq 0,
\]
and thus be identified as the master equation of a non-linear birth-death chain on $\N_0$ with time-dependent birth and death rate
\begin{equation}\label{bdrate}
	    {\mu_k (t)  {:=}\sum_{l\geq 0} c(k,l) f_l (t) \quad\text{and}\quad \beta_k (t)  {:=}\sum_{l \geq 1} c(l,k)f_l (t)}\, ,
\end{equation}
respectively. Here, we use again the convention $\beta_{-1} (t)\equiv \mu_0 (t)\equiv 0$. This corresponds to the limiting dynamics of the occupation number of a fixed site, where any finite set of those evolves as independent birth-death chains according to the propagation of chaos (see \cite{grosskinsky2019derivation} and references therein for details).

The solutions $(f_k (t):k\geq 0)$ to this system of equations have been studied in \cite{grosskinsky2019derivation,jatuviriyapornchai2016coarsening} and in detail in \cite{schlichting2024variational,schlichting2020exchange}. In condensing systems, solutions show a bump at occupation numbers increasing with time corresponding to the emergence of cluster sites in the condensed phase. The volume fraction of the latter vanishes in time and corresponds to the integral of the bump. To study the asymptotics of the condensed phase, it is therefore advantageous to consider a size-biased empirical distribution, as has been done for zero-range \cite{jatuviriyapornchai2016coarsening} and inclusion processes \cite{chleboun2023sizebiased,jatuviriyapornchai2020structure}. Since \eqref{eq:mis_diff_f_k} conserves the total mass $\rho \equiv m_1 (t)=\sum_{k\geq 1} kf_k (t)$ for all $t\geq 0$, the corresponding size-biased quantities
\begin{equation}\label{eq:pkdef}
	p_k (t):=\frac{1}{\rho} kf_k (t)\ ,\quad k\geq 1\quad\mbox{are normalized with}\quad \sum_{k\geq 1} p_k (t)\equiv 1\ ,
\end{equation}
and describe the fraction of mass in clusters of size $k$. 
From \eqref{eq:mis_diff_f_k} and \eqref{eq:pkdef} it is easy to see that they solve
\begin{align}\label{sbmfe}
	\frac{dp_{k}(t)}{dt} &=\frac{k}{k+1}\mu_{k+1} (t)\, p_{k+1}(t)+\frac{k}{k-1}\beta_{k-1} (t)\, p_{k-1}(t) -\Big(  {\mu_k (t)+\beta_k (t)}\Big)p_{k}(t)\nonumber\\
   &   =   \frac{k}{k+1}\mu_{k+1}(t) p_{k+1}(t) +\beta_{k-1}(t)p_{k-1}(t) +\underbrace{\sum_{n\geq 1}\frac{1}{n} c(n,k-1) f_{k-1}(t)p_{n}(t)}_{=\frac{1}{k-1}\beta_{k-1} (t) p_{k-1} (t)}\nonumber\\
    &\qquad  {-\Big( \frac{k-1}{k}\mu_k(t)+\frac{1}{k}\underbrace{\sum_{n\geq 1} c(k,n-1) f_{n-1}(t)}_{=\mu_k (t)}+ \beta_k(t)\Big)\, p_k(t)}\ ,\qquad k\geq 2\nonumber\\
    \frac{dp_1(t)}{dt} &=\frac{1}{2}\mu_{2} (t)\, p_{2}(t)+\frac{1}{\rho}\beta_0 (t)\, f_0 (t) -\Big(  {\mu_1 (t)+\beta_1 (t)}\Big) p_{1}(t) \nonumber\\
    &=   {\frac{1}{2}\mu_{2} (t)\, p_{2}(t)+ \sum_{n\geq 1}\frac{c(n,0)}{n} f_{0}(t) p_{n}(t)    -\Big(\mu_1 (t)+\beta_1 (t)\Big) p_{1}(t) }
\end{align}
with initial condition $p_k (0) =kf_k (0)/\rho$, $k\geq 1$.  Based on Theorem \ref{thmfactorization}, one can show that the empirical mass processes
\[
t\mapsto P_k^L (\eta (t)):=\frac{1}{N} \sum_{x\in\Lambda} k\delta_{\eta_x (t),k}=\frac{L}{N} kF^L_k(\eta(t)) \in [0,1]\ ,\quad k\geq 1
\]
converge to solutions of \eqref{sbmfe}. $P_k^L(\eta)$ counts the fraction of particles on sites with $k$ particles.   Following our main result, we will see that \eqref{sbmfe} can be interpreted as the master equation for a process on $\N$, describing the mass on the site of a tagged particle.

\subsection{Main result}

The evolution of the occupation number on the tagged particle site is denoted by $\W^L(t):=\eta_{X(t)}(t)$. To study its dynamics we apply the generator \eqref{taggedsyst} to a test function $G(\eta,x)=g(\eta_x)$
and find
\begin{multline}
	\hat{\Lcal}^L_{\eta} g(\eta_x)= \sum_{y\in\Lambda}\frac{1}{L-1} c(\eta_{y},\eta_{x})(g(\eta_x+1)-g(\eta_x))(1-\delta_{xy}) \\+ \sum_{y\in\Lambda} \frac{1}{L-1}c(\eta_x,\eta_y) \left[\frac{\eta_x-1}{\eta_x}\left( g(\eta_x-1)-g(\eta_x)\right)+  \frac{1}{\eta_x}\left( g(\eta_y+1)-g(\eta_x) \right) \right](1-\delta_{xy}) \ .
\end{multline}
Plugging in the process, this can be written for each $n\geq 1$ as
\begin{multline}\label{genN}
	\hat{\Lcal}^L_{\eta(t)}  g(n)=\frac{L}{L-1}\sum_{k \geq 1} c(k,n)F^L_{k}(\eta(t)) \big(g(n+1)-g(n)\big)\\
   {+}\frac{L}{L{-}1}\bigg(\frac{n{-}1}{n} \sum_{k\geq 0} c(n,k)F^L_{k}(\eta(t))\big( g(n{-}1)-g(n)\big)+\frac{1}{n}\sum_{k\geq 0} c(n,k) F^L_{k}(\eta(t)) \left( g(k{+}1)-g(n) \right)  \bigg)\\
 -\frac{1}{L-1}c(n,n)\bigg(\frac{n+1}{n} \left( g(n{+}1)-g(n) \right)+ \frac{n-1}{n} \big( g(n{-}1)-g(n)\big)\bigg)  \ .
\end{multline}
Note that the process $(\W^L(t), t\geq 0)$ is itself not a Markov process, since its generator depends also on the state of the configuration $\eta (t)$. 
Based on Theorem \ref{thmfactorization}, we have that for each $n\in \N$, in the limit $L\to \infty$, \eqref{genN} converges to a time-inhomogeneous generator
\begin{align}\label{limgen}
	\hat{\Lcal}_t g(n)=\beta_n(t)\big(g(n{+}1)-g(n)\big)&+ \frac{n{-}1}{n} \mu_n(t) \big( g(n{-}1)-g(n)\big)\nonumber \\&\quad+ \frac{1}{n}\sum_{k\geq 1} c(n,k{-}1) f_{k-1}(t) \left( g(k){-}g(n) \right) \ .
\end{align}
This generator describes a birth-death process with time-dependent birth and death rates $\beta_n (t)$ and $\frac{n-1}{n}\mu_n (t)$ as given in \eqref{bdrate}, and with additional long-range jumps when the tagged particle changes position.   {Notice that the master equation that corresponds to this process coincides with \eqref{sbmfe}.} Here is our main result.

\begin{theorem}\label{mainthm}
	Consider a tagged particle process with generator \eqref{taggedsyst} on the complete graph with sublinear rates (\ref{eq:lip}) and initial conditions satisfying \eqref{initialcon0}, \eqref{initialcon0b}, \eqref{initialcon0c} and \eqref{initialcon0d}. In the thermodynamic limit \eqref{thermo}, for any $\rho >0$, 
	\[
	\big( \W^L(t) : t   \geq 0\big) \to \big( \hat{\W}(t): t   \geq 0\big) \quad\text{weakly on }   D_{[0,\infty)}(E) ,
	\]
	where $\big( \hat{\W}(t): t\geq 0\big)$ is a time-inhomogeneous Markov process on $\N$ with generator $\hat{\Lcal}_t$ \eqref{limgen} and corresponding master equation \eqref{sbmfe}.
\end{theorem}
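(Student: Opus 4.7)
The plan is a classical tightness-plus-martingale-problem approach. By Dynkin's formula, for every bounded Lipschitz $g:\N\to\R$,
\[
M^L_g(t) := g(\W^L(t)) - g(\W^L(0)) - \int_0^t \hat{\Lcal}^L_{\eta(s)} g(\W^L(s))\, ds
\]
is a $\P^L$-martingale. I would first establish tightness of $(\W^L)_L$ on $D_{[0,\infty)}(\N)$, then show that any subsequential limit satisfies the martingale problem for $\hat{\Lcal}_t$ as given in \eqref{limgen}, and finally verify uniqueness of that martingale problem for the initial distribution inherited from $\W^L(0)=\eta_{X(0)}(0)$.

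The first technical ingredient is a uniform-in-$L$ bound on low moments of $\W^L(t)$, locally in time. Applying Dynkin's formula to $g(n)=n$ and $g(n)=n^2$ (with a standard truncation to ensure integrability) and using the sublinear bound \eqref{eq:lip}, $\frac{d}{dt}\E^L[\W^L(t)^p]$ for $p=1,2$ can be bounded by an affine combination of $\E^L[\W^L(t)^p]$ and the empirical moments $\E^L\bigl[L^{-1}\sum_x \eta_x^q(t)\bigr]$ for $q\leq 3$. The latter are uniformly controlled locally in time via Theorem \ref{thmfactorization} and \eqref{initialcon0c}, while $\E^L[\W^L(0)^2]$ is controlled by \eqref{initialcon0d}. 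A Gronwall argument then yields $\sup_L \sup_{t\leq T}\E^L[\W^L(t)^2] < \infty$ for every $T>0$. Compact containment of $\W^L$ and the oscillation estimate in Aldous's criterion both follow from this bound combined with pathwise integrability of the total exit rate under $\hat{\Lcal}^L_{\eta(s)}$, giving tightness in $D_{[0,\infty)}(\N)$.

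To identify the limit, each empirical sum in \eqref{genN} can be written in the form $\sum_{k\geq 0} F^L_k(\eta(s))\, h_n(k)$, and the sublinear bound \eqref{eq:lip} together with boundedness of $g$ ensures $|h_n(k)|\leq C_{n,g}(1+k)$. Theorem \ref{thmfactorization} gives convergence of such empirical sums against Lipschitz test functions; a truncation argument using the uniform second moment of $\eta(s)$ extends this to linearly bounded $h_n$, yielding $\hat{\Lcal}^L_{\eta(s)} g(n) \to \hat{\Lcal}_s g(n)$ in probability for every fixed $(s,n)$. The diagonal correction in \eqref{genN} is $O(1/L)$ and vanishes in the limit. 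Combined with the Skorohod representation and a dominated convergence argument passing to the limit in the martingale relation, any weak limit $\hat{\W}$ satisfies the martingale problem for $\hat{\Lcal}_t$.

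The main obstacle is uniqueness of this martingale problem, since $\hat{\Lcal}_t$ is time-inhomogeneous and features long-range jumps with rates $\frac{1}{n}c(n,k-1)f_{k-1}(t)$ whose tail in $k$ is only controlled through moments of $f(t)$, which may become heavy in condensing regimes. However, the total exit rate from state $n$ simplifies to $\beta_n(t)+\mu_n(t) \leq C(1+\rho)(1+n)$, so the process cannot explode. Uniqueness then follows because the forward equation associated with $\hat{\Lcal}_t$ is exactly \eqref{sbmfe}, the size-biased reformulation of \eqref{eq:mis_diff_f_k}, which admits a unique global solution by Theorem \ref{thmfactorization}; a Picard iteration on the time-inhomogeneous semigroup converts this into uniqueness of the martingale problem, completing the weak convergence.
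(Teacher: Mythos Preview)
Your overall architecture (moment bounds $\Rightarrow$ tightness $\Rightarrow$ identification via the martingale problem) is exactly the one the paper follows, and your moment estimates and identification step match Sections~3.1 and~3.3 of the paper almost verbatim: the paper also uses Gronwall on $m_n^L$ and $\hat m_n^L$, and establishes \eqref{toshowa} by splitting on $\{\W^L(s)\le M\}$ and invoking Theorem~\ref{thmfactorization} for the Lipschitz test function $k\mapsto k$ together with uniform integrability from the second-moment bound.

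The one substantive difference is tightness. You appeal to Aldous' criterion via ``pathwise integrability of the total exit rate'', but this is where the long-range jumps bite: the exit rate from $n$ is only $O(n)$, not uniformly bounded, and the jump sizes depend on the random empirical measure $F^L(\eta(s))$, so the oscillation estimate needs a genuine argument rather than a one-liner. The paper sidesteps this by coupling $\W^L$ to a monotone increasing process $\bar\W^L$ (replacing death and long-range jumps by larger upward jumps at dominating rates) and then applying a tightness criterion for increasing jump processes; the two-jump probability is handled by an explicit computation (Appendix~B). Your route is not wrong, but it would require essentially the same work you are trying to avoid. Finally, your uniqueness paragraph conflates two issues: Theorem~\ref{thmfactorization} gives uniqueness of $f(t)$, which fixes the coefficients $\mu_n(t),\beta_n(t),f_{k}(t)$ in $\hat\Lcal_t$; uniqueness of the martingale problem is then the standard statement for a \emph{linear} time-inhomogeneous jump process with non-explosive rates, and has nothing to do with \eqref{sbmfe} admitting a unique solution. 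The paper also takes this last point as given without proof.
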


Therefore, in a mean-field scaling limit, the evolution of the occupation number on the tagged particle site, $\eta_{X(t)} (t)$, converges to a time-inhomogeneous  Markov process on $\N$ with (non-linear) master equation \eqref{sbmfe} given by the law of large numbers of size-biased empirical measures.   \textcolor{black}{This provides a direct interpretation of the dynamics of these measures in terms of the underlying particle system in analogy to propagation of chaos for unbiased empirical measures \cite{grosskinsky2019derivation}. Our method of proof also directly extends to occupation numbers on any finite number of tagged particle sites. Even if correlated by initial conditions, they will evolve independently eventually, since tagged particles do not revisit the same site asymptotically in a mean-field scaling limit.} As was demonstrated in \cite{godreche2016coarsening,jatuviriyapornchai2016coarsening} for the example of a condensing zero-range process, this can be used to devise efficient numerical schemes to study the coarsening dynamics of the condensed phase emerging from a supercritical homogeneous initial condition. In particular, the expectation
\[
\E [\hat{\W}(t)] =\sum_{k\geq 1} kp_k (t)=\frac{1}{\rho}\sum_{k\geq 1} k^2 f_k (t)
\]
describes the second moment of the particle system, which is increasing with $t$ following a coarsening scaling law for condensing systems (see e.g.\ \cite{godreche2016coarsening,jatuviriyapornchai2016coarsening,schlichting2020exchange} for details).


\section{Proof of the main result\label{sec:proof}}

\subsection{Moment bounds}

As a first step, we collect some useful results on moments and establish a time-dependent bound on the moments of the processes  $\eta_x(t)$ for $x\in\Lambda$ and $\W^L(t)$. 
For any integer $n\geq 0$ denote the $n$-th moment by
\begin{equation}\label{lmoment}
	m_n^L (t):=\E^L \Big[\frac{1}{L}\sum_{x\in\Lambda} \big(\eta_x (t)\big)^n\Big] =\E^L \Big[\sum_{k\geq 0} k^n F_k^L (\eta (t))\Big]\ .
\end{equation}
We have $m_0^L (t)\equiv 1$ and with \eqref{initialcon0b}, $m_1^L (0)\to\rho$ and $m_2^L (0)\to m_2 (0)<\infty$. The uniform conditions \eqref{initialcon0c} on the moments further imply for all $L\geq 2$ that $m_2^L (0)\leq \alpha_2$,   {$m_3^L (0)\leq \alpha_3,$} and with conservation of mass \eqref{alpha}, we have $m_1^L (t)\leq \rho$ for all $t\geq 0$, while higher moments typically grow in time for condensing systems (see e.g. \cite{godreche2016coarsening,jatuviriyapornchai2016coarsening,schlichting2020exchange}). The following result gives a general (but very rough) upper bound.

\begin{proposition}\label{gwall}
	Assume that the sequence $\big( m^L_{n}(0)\big)_{L\geq 2}$ is bounded uniformly in $L$ for some integer $n\in\mathbb{N}$. Then there exists a constant $  {B}_n>0$ independent of $L$ such that
	\begin{equation}\label{eq:gwall}
		m^L_n (t) \leq   {B_n} e^{   {B}_n t}\quad\mbox{for all }t\geq 0\mbox{ and }L\geq   {2}\ .
	\end{equation}
\end{proposition}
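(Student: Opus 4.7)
The plan is to proceed by induction on $n$, treating $n=0,1$ as base cases and reducing the inductive step to a Gronwall-type differential inequality. Since $E_{L,N}$ is finite, the observable $g_n(\eta):=\frac{1}{L}\sum_{z\in\Lambda}\eta_z^n$ is bounded, so Dynkin's formula gives the clean identity $\frac{d}{dt}m_n^L(t)=\E^L[\Lcal g_n(\eta(t))]$, and the task reduces to a pointwise upper bound on $\Lcal g_n(\eta)$.

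\textbf{Key calculation.} Under a jump $x\to y$ the increment of $g_n$ is
\[
	g_n(\eta^{x\to y})-g_n(\eta)=\frac{1}{L}\bigl[(\eta_x-1)^n-\eta_x^n+(\eta_y+1)^n-\eta_y^n\bigr].
\]
The first bracket is non-positive on the support of the rates (where $\eta_x\geq 1$ by \eqref{cassum}), so I would drop it to obtain an upper bound. For the second bracket, the mean-value theorem together with the crude estimate $(\eta_y+1)^{n-1}\leq 2^{n-1}(\eta_y^{n-1}+1)$ gives $(\eta_y+1)^n-\eta_y^n\leq n\,2^{n-1}(\eta_y^{n-1}+1)$. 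Combined with the sublinear rate bound \eqref{eq:lip}, mass conservation $\tfrac{1}{L}\sum_x\eta_x=N/L\leq\rho$ from \eqref{alpha}, and $L/(L-1)\leq 2$ for $L\geq 2$, the double sum factorises and yields
\[
	\Lcal g_n(\eta)\leq A_n\bigl(g_n(\eta)+g_{n-1}(\eta)+1+\rho\bigr)
\]
for a constant $A_n$ depending only on $n$, $C$, and $\rho$, in particular independent of $L$.

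\textbf{Induction and Gronwall.} Taking expectation, the above and Dynkin give
\[
	\frac{d}{dt}m_n^L(t)\leq A_n\bigl(m_n^L(t)+m_{n-1}^L(t)+1+\rho\bigr),\qquad L\geq 2.
\]
The base cases are immediate: $m_0^L\equiv 1$ and $m_1^L(t)=N/L\leq\rho$ by conservation of mass. For the inductive step, the hypothesis $\sup_L m_n^L(0)<\infty$ together with Jensen's inequality ensures $\sup_L m_j^L(0)<\infty$ for all $j\leq n$, so the inductive hypothesis supplies $m_{n-1}^L(t)\leq B_{n-1}e^{B_{n-1}t}$ uniformly in $L$. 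Feeding this into the inequality and applying the standard Gronwall lemma delivers $m_n^L(t)\leq B_n e^{B_n t}$, with $B_n$ chosen large enough to absorb $A_n$, $B_{n-1}$, the uniform initial bound, and the additive constant $A_n(1+\rho)$.

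\textbf{Main obstacle.} The only real subtlety is keeping the constant $A_n$ independent of $L$. Dropping the non-positive departure term $(\eta_x-1)^n-\eta_x^n$ makes this painless; retaining it would force one to identify the leading $\pm n\eta^{n-1}$ pieces on both sides of the jump and establish their cancellation, which would require a more delicate symmetry argument and could introduce $L$-dependent remainders. The sublinear rate bound \eqref{eq:lip} and the a priori estimate $N/L\leq\rho$ are what ultimately allow the double sum to factorise with $L$-uniform coefficients.
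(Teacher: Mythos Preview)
Your proof is correct and follows essentially the same line as the paper: apply the generator to the $n$-th power, drop the non-positive departure increment, use the sublinear rate bound \eqref{eq:lip} together with mass conservation $N/L\leq\rho$, and close with Gronwall. The only noteworthy difference is that the paper avoids your induction on $n$ by observing directly that $m_j^L(t)\leq m_n^L(t)$ for all $1\leq j\leq n$ (since $k^j\leq k^n$ on $\N_0$), which collapses the degree-$n$ polynomial $(1+k)\bigl((k+1)^n-k^n\bigr)$ into a single self-contained inequality $\frac{d}{dt}m_n^L\leq \hat B_n(1+m_n^L)$ and yields the bound in one Gronwall step rather than $n$.
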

\begin{proof}
	Applying the generator \eqref{eq:GenMis} to the function $g(\eta)=\eta_x^{  {n}}$ for $n\in \N$ and some $x\in \Lambda$, we get
	\begin{equation}\label{genhm}
		\Lcal \eta_x^{  {n}}=\frac{1}{L-1}\left(\sum_{y\neq x} c(\eta_{  {y}},\eta_{  {x}}) \left((\eta_x+1)^{  {n}}-\eta_x^{  {n}}\right)+ \sum_{y\neq x} c(\eta_{  {x}},\eta_{  {y}}) \left((\eta_x-1)^{  {n}}-\eta_x^{  {n}}\right)\right)   {\ .}    
	\end{equation}
	Note that $  {p^\pm_{n-1} (k):=(k\pm 1)^n -k^n}$ is a polynomial of degree $n-1$, which implies with (\ref{genhm}) and sublinear rates (\ref{eq:lip}) that
	\begin{align}
		\frac{d}{dt}m^L_n(t) &=\frac{1}{L} \sum_{x\in \Lambda} \E^L\left[ \Lcal \eta_x^n(t)  \right]= \frac{1}{L-1} \E^L\bigg[ \sum_{k,l \geq 0}  {c(l,k)}  p^+_{n-1}(k)\Big(F_k^L(\eta(t))L-\delta_{k,l}\Big)F_l(\eta(t))\nonumber\\
		& \quad+ \sum_{k,l\geq 0}   {c(k,l)} p^-_{n-1}(k)\Big(F_k^L(\eta(t))L-\delta_{k,l}\Big) F_l (\eta (t))\bigg]\nonumber\\
		& =\frac{L}{L-1} \E^L\bigg[ \sum_{k,l \geq 0} \Big(   {c(l,k)}p^+_{n-1}(k)+  {c(k,l)}p^-_{n-1}(k) \Big) F_k^L(\eta(t))F_l(\eta(t)) \bigg]\nonumber\\
		& \quad- \frac{1}{L-1} \E^L \bigg[\sum_{k\geq 1}c(k,k) \Big( p^+_{n-1}(k)+p^-_{n-1}(k)\Big) F_k^L(\eta(t))\bigg]\nonumber\\
		&\leq 2C \E^L\bigg[ \sum_{k,l \geq 0} l(1+k)p^+_{n-1}(k) F_k^L(\eta(t))F_l(\eta(t)) \bigg]\nonumber\\
		&\leq  2C\rho\E^L\bigg[ \sum_{k\geq 0}   (1+k)p^+_{n-1}(k)F_k^L(\eta(t))\bigg]
	\end{align}
	Here we used that $p_{n-1}^- (k)\leq 0$ $\forall k\geq 1$ and $p_{n-1}^+ (k)+ p_{n-1}^- (k)\geq 0$ in the first inequality, and conservation of mass \eqref{alpha} in the second inequality.   Since  $m^L_{n} (t)\leq m^L_{n+1} (t)$ for all $n\geq 1$, this implies for some constant $  {\hat{B}}_n >0$
	\[
	\frac{d}{dt} m^L_n (t)\leq    {\hat{B}}_n \big( 1+m^L_n (t)\big)\ ,
	\]
	which implies, based on Gronwall's Lemma and the boundedness of $m^L_n(0)$,  that 
\[
 m^L_n (t)\leq   \big( 1+m^L_n (0)\big)e^{\hat{B}_nt}\leq B_ne^{B_nt} \ .
 \]
for some constant $B_n$ which does not depend on $L$.	
\end{proof}

\noindent
In the following, we denote the  $n$-th moment of the process $\W^L(t)$ by
\begin{equation}\label{hatmom}
	\hat{m}^L_n(t):=\E^L \big[ (\W^L (t))^n \big] =\E^L\big[\left(\eta_{X(t)}(t)\right)^n\big]\ .
\end{equation}
Notice that based on initial condition \eqref{initialcon0d}, we have $\hat{m}^L_2(0)\leq \alpha_4$. Similarly to Proposition \ref{gwall}, we can establish the following (rough) bounds on the moments of this process.

\begin{proposition}\label{gwallΝ}
	Assume that the sequence  $\big( m^L_{n+1}(0)\big)_{L\geq 2}$ is bounded for some integer $n\in \N$. Then, there exists a constant   {$C_n>0$} independent of $L$ such that
	\begin{equation}\label{eq:gwallN}
  	\hat{m}^L_n (t) \leq \big(  { \hat{m}^L_n(0)+C_nt}\big) e^{  {C_n} t}\quad\mbox{for all }t\geq 0\mbox{ and }L\geq   {2}\ .
	\end{equation}
\end{proposition}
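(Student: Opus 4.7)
The plan is to mimic the proof of Proposition \ref{gwall}, now applying the joint generator $\tilde{\Lcal}$ of \eqref{taggedsyst} to the test function $G(\eta,x)=\eta_x^n$, and then to feed Proposition \ref{gwall} at the next order $n+1$ into a Gronwall argument. Starting from $\frac{d}{dt}\hat{m}^L_n(t)=\E^L[\tilde{\Lcal}\,\eta_{X(t)}^n(t)]$, I would split $\tilde{\Lcal}\,\eta_x^n$ into the three kinds of transitions visible in \eqref{taggedsyst}: (a) a non-tagged particle jumps from some $y\neq x$; (b) a non-tagged particle leaves the tagged site (probability $(\eta_x-1)/\eta_x$); and (c) the tagged particle itself leaves $x$ for some target $z$ (probability $1/\eta_x$).

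For (a), only jumps with target $z=x$ affect $\eta_x^n$, producing the polynomial increment $p^+_{n-1}(\eta_x)=(\eta_x+1)^n-\eta_x^n$, and the total in-rate $\frac{1}{L-1}\sum_{y\neq x}c(\eta_y,\eta_x)$ is controlled by $2C\rho(1+\eta_x)$ via \eqref{eq:lip} and the conservation of mass \eqref{alpha}; this yields a pointwise polynomial of degree $n$ in $\eta_x$, with expectation bounded by $A_1(1+\hat{m}^L_n(t))$. Part (b) has increment $p^-_{n-1}(\eta_x)\leq 0$ and is dropped in the upper bound, exactly as in the proof of Proposition \ref{gwall}. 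For (c), the factor $1/\eta_x$ cancels against the $\eta_x$ in the bilinear bound \eqref{eq:lip}, giving pointwise
\[
\frac{1}{L-1}\sum_{z\in\Lambda}c(\eta_x,\eta_z)\frac{1}{\eta_x}\bigl((\eta_z+1)^n-\eta_x^n\bigr)\leq \frac{C}{L-1}\sum_{z\in\Lambda}(\eta_z+1)^{n+1},
\]
whose expectation is bounded by $K_n\bigl(1+m^L_{n+1}(t)\bigr)$ for $L\geq 2$, and hence by $A_2 e^{B_{n+1}t}$ after invoking Proposition \ref{gwall} at order $n+1$ (whose hypothesis coincides with our standing assumption).

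Combining the three estimates yields a differential inequality of the form $\frac{d}{dt}\hat{m}^L_n(t)\leq A_1\hat{m}^L_n(t)+A_3 e^{B_{n+1}t}$ with constants independent of $L$. Choosing $C_n$ to dominate $A_1$, $B_{n+1}$ and $A_3$, Gronwall's lemma then gives
\[
\hat{m}^L_n(t)\leq \hat{m}^L_n(0)e^{C_n t}+A_3\int_0^t e^{A_1(t-s)+B_{n+1}s}\,ds\leq \bigl(\hat{m}^L_n(0)+C_n t\bigr)e^{C_n t},
\]
which is \eqref{eq:gwallN}.

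The main technical point I anticipate is term (c): because the tagged particle can jump to an arbitrary target, the corresponding part of the generator sees the occupation of a generic site rather than of the tagged one, so after averaging one inevitably encounters the $(n+1)$-th moment of the full configuration. This is the structural reason why the hypothesis is imposed on $m^L_{n+1}(0)$ and why Proposition \ref{gwall} must be invoked one order higher to supply the forcing term in Gronwall's inequality; once this is recognised, the rest of the argument is a direct transcription of the computation used for Proposition \ref{gwall}.
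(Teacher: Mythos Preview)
Your proposal is correct and follows essentially the same route as the paper. The paper applies the generator \eqref{genN} to $g(l)=l^n$, drops the non-positive death and self-jump correction terms, bounds the birth contribution by a degree-$n$ polynomial in $\W^L(t)$ and the long-range jump contribution by $\sum_k (1+k)^{n+1}F_k^L(\eta(t))$ to obtain $\frac{d}{dt}\hat m^L_n(t)\leq \tilde C_n(1+\hat m^L_n(t)+m^L_{n+1}(t))$, then invokes Proposition~\ref{gwall} at order $n+1$ and Gronwall---exactly the mechanism you describe, including the observation that the tagged-particle relocation is what forces the appearance of $m^L_{n+1}$.
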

\begin{proof}
	Applying the generator \eqref{genN} to the function $g(l)=l^n$ for $n\in \N$, we get 
	\begin{multline}\label{genhmn}
		\frac{ d\hat{m}^L_n(t)}{d t}=\E^L \left[\hat{\Lcal}^L_{\eta(t)} (\W^L(t))^n\right] =\frac{L}{L-1} \E^L \left[\sum_{k \geq 1} c(k,\W^L(t))F^L_{k}(\eta(t)) p_{n-1}^+(\W^L(t))\right] \\
		+  \frac{L}{L-1}\E^L \left[\frac{1}{\W^L(t)}\sum_{k\geq 0} c(\W^L(t),k) F^L_{k}(\eta(t)) \Big( (k+1)^n-(\W^L(t))^n\Big)\right]\\
		- \frac{L}{L-1}\E^L \left[ \frac{\W^L(t)-1}{\W^L(t)} \sum_{k\geq 0} c(\W^L(t),k)F^L_{k}(\eta(t))p_{n-1}^+( \W^L(t)-1)) \right] \\
		-\frac{1}{L-1} \E^L \left[\frac{c(\W^L(t),\W^L(t))}{\W^L(t)}\left( (\W^L(t)+1)p_{n-1}^+(\W^L(t))-(\W^L(t)-1)p_{n-1}^+(\W^L(t)-1) \right) \right] 
	\end{multline}
	where we used $p_{n-1}^- (k)=-p_{n-1}^+ (k-1)$. 
	Since the functions $  {l}\mapsto lp_{n-1}^+(  {l})$ are increasing for all $n\in \N$, the last two lines in \eqref{genhmn} are negative and therefore, we have for some polynomial $q_n$  of degree $n$ and a positive constant ${\tilde{C}}_n$
	\begin{align*}
		\frac{ d\hat{m}^L_n(t)}{d t}&\leq \frac{L}{L-1} C\rho\E^L \left[  {q}_n(\W^L(t)) \right] +  \frac{L}{L-1}C\E^L \left[\sum_{k\geq 0} (1+k)^{n+1}F^L_{k}(\eta(t)) \right]\\&
		\leq   {\tilde{C}}_n(  {1+}\hat{m}^L_n(t)+m^L_{n+1}(t))\leq    {\tilde{C}}_n \Big(2\hat{m}^L_n(t) +  B_{n+1} { e^{ B_{n+1}t}} \Big)\ .
	\end{align*}
	In the last line we used relation  \eqref{eq:gwall} and that $n\mapsto m^{L}_n(t)$ and $n\mapsto \hat{m}^{L}_n(t)$ for all $t\geq 0$ are non-decreasing.
 The result then follows by Gronwall's Lemma.
\end{proof}

\noindent
Based on Proposition \ref{gwallΝ} and assumptions \eqref{initialcon0c}, \eqref{initialcon0d}, we have the following corollary.

\begin{corollary}
	   {Under assumptions \eqref{initialcon0c} and \eqref{initialcon0d}}, there exists
  a constant $C_2>0$ independent of $L$ such that
 \begin{equation}\label{sec_mom_boundN}
  \hat{m}^L_2 (t) \leq    {\big(\alpha_4+C_2t\big) e^{C_2 t}}\quad\mbox{for all }t\geq 0\, ,\ L\geq  2\ .
 \end{equation}
 
\end{corollary}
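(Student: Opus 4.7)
The plan is to invoke Proposition \ref{gwallΝ} directly with $n=2$, and then chase through the hypotheses to see that they follow from assumptions \eqref{initialcon0c} and \eqref{initialcon0d}. Concretely, Proposition \ref{gwallΝ} with $n=2$ requires that the sequence $(m_3^L(0))_{L\geq 2}$ is bounded uniformly in $L$; this is furnished by the third moment part of \eqref{initialcon0c}, which gives $m_3^L(0)\leq \alpha_3$ for every $L\geq 2$. Granted this hypothesis, the proposition yields the existence of a constant $C_2>0$, independent of $L$, such that
\[
\hat{m}_2^L(t) \leq \big(\hat{m}_2^L(0)+C_2 t\big) e^{C_2 t}\quad\text{for all }t\geq 0,\ L\geq 2.
\]

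To finish, I would use \eqref{initialcon0d}, which provides the uniform-in-$L$ bound $\hat{m}_2^L(0)=\E^L[\eta_{X(0)}^2(0)]\leq \alpha_4$. Substituting this into the preceding inequality gives exactly
\[
\hat{m}_2^L(t)\leq (\alpha_4+C_2 t)e^{C_2 t}
\]
for all $t\geq 0$ and $L\geq 2$, as claimed.

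There is no genuine obstacle here: the corollary is a direct specialization of Proposition \ref{gwallΝ} to $n=2$, with the uniform moment bounds at time zero supplied by the two initial-condition assumptions. The only thing to double-check is that the constant $C_2$ produced by the proposition (which in its proof depends on $B_3$ from Proposition \ref{gwall} applied to $m_3^L$) is indeed independent of $L$, but this is already built into the statement of the proposition since $B_3$ depends only on a uniform upper bound for $m_3^L(0)$, namely $\alpha_3$.
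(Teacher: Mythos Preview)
Your proposal is correct and is exactly the approach indicated in the paper: the corollary is stated immediately after Proposition~\ref{gwallΝ} as a direct consequence of that proposition with $n=2$, together with assumptions \eqref{initialcon0c} (giving the uniform bound on $m_3^L(0)$) and \eqref{initialcon0d} (giving $\hat m_2^L(0)\leq\alpha_4$). There is nothing to add.
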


\subsection{Existence of limit processes\label{sec:existence}}

\begin{proposition}\label{tight\W}
	Consider the process with generator \eqref{genN} and conditions as in Theorem \ref{mainthm}. Denote by $\mathbb{Q}^L$ the law of the process $t \mapsto \W^L(t)$ on path space $D_{[0,\infty)}(\N)$, which is the image measure of $ \mathbb{P}^L$ under the mapping $(\eta,x)\mapsto \eta_x$. Then $\mathbb{Q}^L$ is tight as $L \to \infty$.
\end{proposition}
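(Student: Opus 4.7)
The plan is to apply the standard Aldous tightness criterion in $D_{[0,T]}(\N)$ for each fixed $T>0$; tightness on $D_{[0,\infty)}(\N)$ then follows. Since $\N$ carries the discrete topology, its compact subsets are exactly the finite sets, so the required compact containment amounts to uniformly controlling $\sup_{t\leq T}\W^L(t)$ in probability as $L\to\infty$. Two ingredients therefore need to be established: a compact containment bound, and an Aldous-type control of increments across stopping times.

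For the compact containment, I would use the Dynkin decomposition
\[
g(\W^L(t)) = g(\W^L(0)) + \int_0^t \hat{\Lcal}^L_{\eta(s)} g(\W^L(s))\,ds + M^L_g(t)
\]
with the test function $g(n)=n^2$. Expanding \eqref{genN} and applying the sublinear rate bound \eqref{eq:lip}, one obtains a pointwise estimate of the form $|\hat{\Lcal}^L_{\eta(s)} n^2|\leq \tilde C(1+n^2+m_3^L(s))$, whose expectation on $[0,T]$ is uniformly bounded in $L$ by Proposition \ref{gwall} (applied with $n=3$, using the third-moment assumption in \eqref{initialcon0c}) together with \eqref{sec_mom_boundN}. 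The predictable quadratic variation $\langle M^L_{n^2}\rangle$ admits an analogous bound in terms of uniformly bounded higher moments. Doob's maximal inequality then yields $\E^L\bigl[\sup_{t\leq T}(\W^L(t))^2\bigr]\leq C(T)$, and Markov's inequality provides compact containment.

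For the Aldous condition, given stopping times $\sigma_L\leq\tau_L\leq(\sigma_L+\delta)\wedge T$, apply Dynkin with $g(n)=n$:
\[
\W^L(\tau_L)-\W^L(\sigma_L)=\int_{\sigma_L}^{\tau_L}\hat{\Lcal}^L_{\eta(s)}\mathrm{id}(\W^L(s))\,ds+M^L_{\mathrm{id}}(\tau_L)-M^L_{\mathrm{id}}(\sigma_L).
\]
Using \eqref{genN} and \eqref{eq:lip}, $|\hat{\Lcal}^L_{\eta(s)}\mathrm{id}(n)|\leq C(1+n+m_2^L(s))$, so by Fubini and the previous moment bounds the drift term is bounded in $L^1$ by $C(T)\delta$. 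The expected increment of $\langle M^L_{\mathrm{id}}\rangle$ over $[\sigma_L,\tau_L]$ is controlled in the same way, and Chebyshev's inequality yields $\lim_{\delta\downarrow 0}\limsup_L \P^L(|\W^L(\tau_L)-\W^L(\sigma_L)|>\veps)=0$.

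The principal technical obstacle I anticipate is the long-range jump term in \eqref{genN}, in which the tagged particle may move to a site with arbitrarily large occupation $k+1$. Its contribution to $\hat{\Lcal}^L_{\eta(s)} g$ produces an additional factor $\sum_{k\geq 0}c(n,k)F_k^L(\eta)(k+1)^j$, which under \eqref{eq:lip} is dominated by $Cn(1+m_{j+1}^L(s))$. Controlling this term therefore relies on uniform-in-$L$ bounds on higher moments $m_k^L(s)$ on compact time intervals, which is exactly what Proposition \ref{gwall} supplies; the same reduction handles the corresponding contribution to the quadratic variation.
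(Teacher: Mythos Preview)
Your quadratic-variation estimates require moments that are not available under the hypotheses. For compact containment with $g(n)=n^2$, the carr\'e du champ picks up from the long-range jump in \eqref{genN} a term $\tfrac{1}{n}\sum_{k}c(n,k)F_k^L(\eta)\big((k+1)^2-n^2\big)^2$, which under \eqref{eq:lip} is of order $\sum_k(1+k)^5F_k^L(\eta)+n^4$. Doob's inequality therefore needs $\hat m_4^L$ and $m_5^L$ bounded on $[0,T]$ uniformly in $L$; but Proposition~\ref{gwall} delivers $m_n^L$ only when $m_n^L(0)$ is controlled, and \eqref{initialcon0c} supplies this only up to $n=3$, while \eqref{sec_mom_boundN} gives only $\hat m_2^L$. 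The same obstruction recurs in your Aldous step: the carr\'e du champ of $g(n)=n$ contains the \emph{pathwise} third empirical moment $\sum_k k^3F_k^L(\eta(s))$, not its expectation $m_3^L(s)$ --- your pointwise bounds conflate the two throughout. Controlling $\E^L\big[\int_{\sigma_L}^{\sigma_L+\delta}\sum_k k^3F_k^L(\eta(s))\,ds\big]$ uniformly over stopping times and over $L$ forces either a maximal bound on this process or, via Cauchy--Schwarz in $s$, control of $\E\big[(\sum_k k^3 F_k^L)^2\big]\le\rho\,m_5^L$; neither is available. The sentence ``by Fubini \dots\ the drift term is bounded in $L^1$ by $C(T)\delta$'' hides exactly this: Fubini gives $\int_0^{T}\E[\,\cdot\,\mathbf{1}_{\{\sigma_L\le s\le\sigma_L+\delta\}}]\,ds$, and one cannot extract a factor of $\delta$ when the integrand is unbounded and correlated with the random interval.

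The paper avoids these higher moments entirely by coupling $W^L$ with a process $\bar W^L$ that has only \emph{positive} jumps of size at least those of $W^L$, so that $|W^L(t+s)-W^L(t)|\le\bar W^L(t+s)-\bar W^L(t)$ pathwise. Since $\bar W^L$ is increasing, $\sup_{s\le T}\bar W^L(s)=\bar W^L(T)$ and compact containment reduces to a plain second-moment bound on $\bar W^L(T)$ (Lemma~\ref{mombarN}, which uses only $m_3^L$); no maximal inequality enters. Tightness of the monotone dominator then follows from a criterion for increasing jump processes \cite{GUT2001101}, whose core ingredient is a two-jump probability estimate. As an aside, your overall strategy can be salvaged with different ingredients --- take $g(n)=n$ for containment, and for the Aldous increment observe that the \emph{total} jump rate of $W^L$ from state $n$ is at most $C'(1+n)$ uniformly in $\eta$ and $L$ (only mass conservation is used), whence $\P\big(W^L(\tau_L{+}\delta)\neq W^L(\tau_L)\,\big|\,\mathcal F_{\tau_L}\big)\le C'(1+W^L(\tau_L))\delta$ --- but this is not the argument you outlined.
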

\begin{proof}
To establish tightness for $\mathbb{Q}^L$, we will use a coupling argument. The process $\W^L$ is coupled with a process $\bar{\W}^L$ such that $\bar{\W}^L$ jumps (at least) whenever the process $\W^L$ jumps, with a positive jump of length greater or equal than that of the process $\W^L$. In this way, as demonstrated below, tightness for $\bar{\W}^L$ implies tightness for $\W^L$. 

According to generator \eqref{genN}, for the process $\W^L$ we have: 
\begin{itemize}
    \item Birth rate: The process jumps from $n$ to $n+1$ at rate
    $$ \frac{L}{L-1}\sum_{k \geq 1} c(k,n)F^L_{k}(\eta(t))- \frac{1}{L-1}c(n,n) \leq 2C\rho (1+n)\leq 4C\rho n \ .$$
    \item Death rate: The process jumps from $n$ to $n-1$ at rate
    $$ \frac{L}{L{-}1}\frac{n{-}1}{n} \sum_{k\geq 0} c(n,k)F^L_{k}(\eta(t)) -\frac{1}{L-1}c(n,n) \frac{n-1}{n}\leq 2C(1+\rho)n \ .$$
    \item Long-range jump rate: The process jumps from $n$ to $k+1$ for $k\geq0$  at rate
    $$
    \frac{L}{L{-}1}\frac{1}{n} c(n,k) F^L_{k}(\eta(t))-\frac{1}{L{-}1}\frac{1}{n} c(n,n)\delta_{k,n}\leq 2C(1+k)F^L_{k}(\eta(t))
    \ .$$
\end{itemize}
Based on the above, we consider the jump process $\bar{\W}^L(t)$ as follows:
\begin{itemize}
    \item Birth rate: The process jumps from $n$ to $n+1$ at the increased rate
    $$  \bar{C}n\geq 4C\rho n+2C(1+\rho )n\ ,\quad\mbox{where }\bar{C}:=2C(1+3\rho).$$
    \item Positive long-range jumps: The process jumps from $n$ to $2n+k$ with jump length $n+k>|k+1-n|$ for $k\geq0$ at the increased rate
  $$
        2C(1+k)F^L_{k}(\eta(t)) \ .
        $$
\end{itemize}
Therefore, the generator of the new process $\bar{\W}^L(t)$ is the following
\begin{equation}\label{genN2}
	\bar{\Lcal}^L_{\eta(t)}  g(n)=\bar{C}n \big(g(n+1)-g(n)\big)
 + 2C\sum_{k\geq 0}(1+k)F^L_{k} (\eta(t))  \left( g(2n+k)-g(n) \right) \ .
\end{equation}
Both processes are maximally coupled so that 
whenever the process $\W^L(t)$ jumps, the process $\bar{\W}^L(t)$ also jumps with a positive jump of length at least equal to that of the process $\W^L(t)$. Since the rates are monotone increasing with state $n\in\N$, this implies that almost surely under the coupling path measure $\bar\P^L$. 
\begin{equation}\label{comp}
 |\W^L(t+s)-\W^L(t)| \leq \bar{\W}^L(t+s)-\bar{\W}^L(t) \text{ for all } t,s\geq 0 \ .
\end{equation}
Moreover, we start the two processes with the same initial value, i.e.  \begin{equation}\label{init}
    \bar{\W}^L(0)=\W^L(0)\ ,
\end{equation} 
which implies that $\bar \P^L$-almost surely
\begin{equation}\label{comp0}
 1\leq \W^L(t) \leq \bar{\W}^L(t) \text{ for all } t\geq 0 \ .
\end{equation}
Based on \eqref{comp}, \eqref{comp0}, in order to prove tightness for the processes $\{(\W^L(t): t\geq 0)\}_{L\geq 2}$, it suffices to prove tightness for the processes $\{(\bar{\W}^L(t): t\geq 0)\}_{L\geq 2}$. 

Based on Theorem 2.4 and Remark 4.2 in \cite{GUT2001101}, in order to establish tightness for the increasing jump processes $\{(\bar{\W}^L(t): t\geq 0)\}_{L\geq 2}$
, it suffices to prove the following:
\begin{enumerate}[(i)]
    \item  For each $T>0,$\quad 
    $\lim \limits_{a\to \infty}\sup \limits_{L\geq 2}\P( \sup \limits_{0\leq s \leq  T}|\bar{\W}^L(s)|>a)=0   \ .$
    \item For each $0<a_1<a_2$,
    $$
    \delta^{-1}\limsup \limits_{L\to \infty}  \sup \limits_{a_1\leq s \leq a_2} \P ( \text{at least two } \bar{\W}^L-\text{jumps  in } [s,s+\delta)) \to 0 \text{ as } \delta\to 0^+ \ .
    $$
    \item For every $\epsilon>0$,$\quad
    \limsup \limits_{L\to \infty} \P \left(   \bar{\W}^L(t) -\bar{\W}^L(0)>\epsilon \right) \to 0 \text{ as } t\to 0^+ \ . $
    
\end{enumerate}
For simplicity of notation, here and in the following we use the generic notation $\P$ and $\E$ for the law and expectation of the process $\bar W^L$.\\
\textbf{Proof of (i):} Let $T>0$. Since the positive  process $\bar{\W}^L(t)$ is increasing as a function of $t$, it suffices to prove that
$\lim \limits_{a\to \infty}\sup \limits_{L\geq 2}\P( \bar{\W}^L(T)>a)=0   \ .$ 
By Markov's inequality,
\begin{equation}\label{markovbarn}
   \P( \bar{\W}^L(T)>a)\leq  \frac{\E \big[ \bar{\W}^L(T) \big] }{a}\quad\mbox{for all }a>0\ .
\end{equation}
To control the expectation, we establish a bound on the moment 
\begin{equation}
    \bar{m}_n(t)=\E [ (\bar{\W}^L(t))^n ]\ , \quad\text{for all } t\geq 0\ .
\end{equation}
\begin{lemma}\label{mombarN}
Under assumptions \eqref{initialcon0c} and \eqref{initialcon0d}, there exists
  a constant  {$D_2>0$} independent of $L$ such that
 \begin{equation}\label{sec_mom_bound}
  \bar{m}^L_2 (t) \leq  \alpha_4 e^{e^{D_2 t}}\quad\mbox{for all }t\geq 0\, ,\ L\geq 2\ .
 \end{equation}
\end{lemma}
\begin{proof}
Since $\bar W^L$ is an unbounded process, we will first establish bounds on the moments of the bounded process $\bar W^L\wedge M:=\min (\bar W^L , M)$, namely for 
$$\bar m^L_{2,M}(t)=\E \big[ (\bar W^L(t)\wedge M)^2 \big]\ .$$
Applying  generator \eqref{genN2} to the bounded function $g_M(n)=(n\wedge M)^2$ for $M\in \N$, we get
	\begin{align*}
	\bar{\Lcal}^L_{\eta(t)}  (n\wedge M)^2&=\bar{C}n\left(\left((n+1)\wedge M\right)^2-(n\wedge M)^2\right) \\ 
 & \quad +  2C\sum_{k\geq 0}(1+k)F^L_{k} (\eta(t))  \left( \left((2n+k)\wedge M\right)^2-(n\wedge M)^2 \right) \nonumber  \\
  & \leq 
 \bar{C}(n\wedge M)\left(2(n\wedge M)+1\right) \\ & \quad+  2C\sum_{k\geq 0}(1+k)F^L_{k} (\eta(t))  \left(3(n\wedge M)^2+k^2+4k(n\wedge M) \right)\nonumber  \\
 & \leq 6\bar C (n\wedge M)^2+2C \sum \limits_{k\geq 0} (1+k)^3F^L_k(\eta(t))+8C (n\wedge M)\sum_{k\geq 0}(1+k)^2F^L_{k} (\eta(t))
 \end{align*}
 where we used that for $n>M$, $\left((n+1)\wedge M\right)^2-(n\wedge M)^2=0.$ 
Conditional on $\eta [0,T]$ for some arbitrary $T>0$, $\big(\bar W^L (t):t\in [0,T]\big)$ is a Markov process with time-dependent generator $\bar{\Lcal}^L_{\eta(t)}$. 
Therefore, applying Dynkin's formula (see e.g. Appendix 1.5, Lemma 5.1 \cite{kipnis2013scaling}) and taking expectation over $\eta [0,T]$ we get
\begin{align}\label{eq:dynkin}
 \frac{d}{dt}\bar m^L_{2,M}(t) &=\E\big[ \bar{\Lcal}^L_{\eta(t)} \left((\bar{\W}^L(t))^2\wedge M\right) \big] \nonumber \\
 & \leq D \bigg(\bar{m}^L_{2, M} (t) + {m}^L_3 (t) + \E\bigl[(\bar{\W}^L(t)\wedge M) \sum_{k\geq 0}k^2F^L_{k} (\eta(t)) \bigr] \bigg)
 \end{align}
 where $D>0$ is some absolute constant (independent of $L$ and $M$). Regarding the last term, we have from Cauchy-Schwarz inequality (and since $\bar{m}^L_{2,M}(t)\geq 1$)
\begin{equation*}
    \E\Big[(\bar{\W}^L(t) \wedge M)\sum_{k\geq 0}k^2F^L_{k} (\eta(t)) \Big]\leq  
    \bar{m}^L_{2,M}(t) \Biggl( \E\Biggl[ \Big(\sum_{k\geq 0}k^2F^L_{k} (\eta(t))\Big)^2 \Biggr] \Bigg)^{1/2} .
\end{equation*}
Using that $\{\frac{kF^L_k(\eta(t))}{\sfrac{N}{L}} \}_{k\in \N}$ is a probability mass function, Jensen's inequality implies
\begin{align*}
    \E\Biggl[ \Big(\sum_{k\geq 0}k^2F^L_{k} (\eta(t))\Big)^2 \Biggr] &=\left(\frac{N}{L}\right)^2\E\Biggl[ \Big(\sum_{k\geq 0}k \frac{kF^L_{k} (\eta(t))}{\sfrac{N}{L}}\Big)^2 \Biggr] \nonumber\\&\leq \left(\frac{N}{L}\right)^2\E\Biggl[ \sum_{k\geq 0}k^2 \frac{kF^L_{k} (\eta(t))}{\sfrac{N}{L}}\Biggr] \leq\rho m^L_3(t)\ .
\end{align*}
Therefore, based on Proposition \ref{gwall}, we find 
 \begin{equation*}
   \frac{d}{dt} \bar{m}^L_{2,M} (t) \leq D\left(\bar{m}^L_{2,M} (t) +B_3e^{B_3t} +\sqrt{\rho B_3}e^{B_3t/2} \bar{m}^L_{2,M} (t) \right) \leq  D_2e^{D_2t} \bar{m}^L_{2,M} (t) 
 \end{equation*}
for another absolute constant $D_2 >0$. Since $T>0$ was arbitrary and based on Gronwall's inequality and conditions \eqref{init} and \eqref{initialcon0d}, we have
 \begin{equation*}
    \E \big[(\bar W^L(t)\wedge M)^2 \big] \leq   e^{e^{D_2t}-1} \bar{m}^L_{2,M} (0) \leq \alpha_4 e^{e^{D_2t}} \quad\text{for all } t\geq 0, L\geq 2, M\in\N \ .
 \end{equation*}
 Taking $M\to \infty$, the result then follows by monotone convergence.
 \end{proof}
\noindent
Therefore, \eqref{markovbarn} and Lemma \ref{mombarN} imply \quad
$
 \sup \limits_{L\geq 2}\P( \bar{\W}^L(T)>a) \leq  \frac{\alpha_4e^{e^{D_2 T}}}{a}\ ,
$
which vanishes as $a\to \infty$ and concludes the proof of (i).\\
\textbf{Proof of (ii):} It suffices to prove that for each $T>0:$
\begin{equation}\label{toshowii}
    \delta^{-1}\limsup \limits_{L\to \infty}  \sup \limits_{0\leq s \leq T} 
\E \Big[ \P_{\eta[0,T]} \left( \text{at least two } \bar{\W}^L-\text{jumps  in } [s,s+\delta) \right) \Big]    
    \to 0
\end{equation}
as $\delta\to 0^+$, where $\P_{\eta[0,T]}$ is the conditional measure given the path of $\eta$ on $[0,T]$. Since $\bar W^L$ is a Markov jump process with finite rates, the probability of two or more jumps is of order $\delta^2$ which implies (ii). However, we do not have uniform-in-$L$ control on rates of $\bar W^L$, which requires a slightly technical analysis presented in Appendix \ref{app:ii}. 

\noindent
\textbf{Proof of (iii):} 
By Markov's inequality, we have 
\begin{equation}\label{markoviii}
   \P \left(  \bar{\W}^L(t) -\bar{\W}^L(0)>\epsilon \right)\leq  \frac{\E \big[ \bar{\W}^L(t) -\bar{\W}^L(0) \big] }{\epsilon}\ .
\end{equation}
Using the same reasoning as in \eqref{eq:dynkin}, we get with Dynkin's formula 
	\begin{equation}\label{tobound}
		\E \big[ \bar{\W}^L(t)\wedge M -\bar{\W}^L(0)\wedge M \big] = \int_0^t \E \big[\bar{\Lcal}^L_{\eta(s)}\left(\bar{\W}^L(s)\wedge M\right)\big] ds\ .
	\end{equation}
Based on \eqref{genN2}, we have	
\begin{align*}
	0\leq \bar{\Lcal}^L_{\eta(s)} \left(\bar{\W}^L(s)\wedge M\right) & =\bar{C}\bar{\W}^L(s) \left((\bar{\W}^L(s)+1)\wedge M-\bar{\W}^L(s)\wedge M\right) \\ & \quad
 +2C\sum_{k\geq 0}(1+k)F^L_{k} (\eta(s)) \left((2\bar{\W}^L(s)+k)\wedge M -\bar{\W}^L(s)\wedge M \right) \nonumber \\
 &\leq \bar{C} (\bar{\W}^L(s)\wedge M)+ 2C\sum_{k\geq 0}(1+k)^2F^L_{k} (\eta(s))  \ .
\end{align*}
Therefore,
\begin{equation}\label{tight1}
    0\leq \E \big[\bar{\Lcal}^L_{\eta(s)}(\bar{\W}^L(s)\wedge M)\big] \leq D(\bar{m}^L_{2,M}(s)+{m}^L_2(s))
\end{equation}	
for some absolute constant $D>0$ (independent of $L$ and $M$).
Thus, 
taking $M\to \infty$, by the monotone convergence theorem, Proposition \ref{gwall} and Lemma \ref{mombarN}, we conclude with \eqref{tobound} that
	\begin{equation}
		0\leq \E \big[ \bar{\W}^L(t) -\bar{\W}^L(0) \big]\leq D(\alpha_4e^{e^{D_2t}}+B_2e^{B_2t})t\to 0
	\end{equation}
	as $t \to 0$, which holds uniformly in  $L \geq {2}$ and concludes the proof of condition (iii).
 
\end{proof}
By Prokhorov’s theorem, the tightness result in Proposition \ref{tight\W} implies the existence of sub-sequential limit points of the sequence $(\W^L(t) : t \geq 0)$ in the usual Skorohod topology of weak convergence on path space $D_{[0, \infty )} (\N )$ (see e.g.\ \cite{billingsley2013convergence}, Section 16). We denote the law of any such limit by $\mathbb{Q}$.

\subsection{Characterisation of the limit process}

\noindent
In order to identify the limit $\mathbb{Q}$ we need to show that for all $t  \geq 0$ and $g \in C_b (\N )$, 
\begin{equation}\label{eq:marti}
	g(\omega(t))-g(\omega (0))-\int_0^t \hat{\Lcal}_s g(\omega(s))ds \; \text{ is a martingale w.r.t. }\mathbb{Q}\ ,
\end{equation}
where $\omega\in D_{[0,  \infty)} (\N)$ denotes an element in path space.  Together with the uniqueness of the martingale problem associated with $\hat{\Lcal}_t$, this implies convergence of $\mathbb{Q}^L$ and characterizes the limit $\mathbb{Q}$ as the law of the Markov process $(\hat{\W}(t): \; t\geq 0)$  with generator $\hat{\Lcal}_t$ \eqref{limgen}. 
Following a standard argument presented in Appendix \ref{sec:michalis}, we only need to prove that for all $  t\geq 0$
\begin{equation}\label{toshowa}
	\E^L \left[ \left| \int_0^{  {t}} \left( \hat{\Lcal}_{  {s}} g(\W^L({  {s}})) -\hat{\Lcal}^L_{\eta({  {s}})} g(\W^L({  {s}}))\right)d{  {s}} \right| \right]  \to 0\quad\mbox{as }
L\to\infty\ .
\end{equation}
Since the process $t\mapsto\hat{\Lcal}_{  {\eta(t)}}^L g(\W^L(t))$ is bounded in $L^1$-norm on compact time intervals uniformly with respect to $L$, and using the triangle inequality it suffices to prove that


	\begin{equation}\label{conv_to_zero}
		\int_0^{t} \E^L \left[\left| \hat{\Lcal}_{s} g(\W^L({s})) -  \frac{L-1}{L} \hat{\Lcal}_{\eta({s})}^L g(\W^L({s}))\right|\right] d{s} \ \to 0
	\end{equation}
	as $L\to \infty$. 
	Since $g\in C_b (\N )$ 
	and because of condition \eqref{eq:lip}, we find
	\begin{align*}
		&\left| \hat{\Lcal}_{s} g(\W^L{(s)}) -  \frac{L-1}{L} \hat{\Lcal}_{\eta{(s)}}^L g(\W^L{(s)}) \right| \leq 2||g||_{\infty} \Bigg( \sum_{k \geq 1} c(k,\W^L{(s)})\left|F^L_{k}(\eta{(s)})-f_{k}{(s)}\right| \\
		 &\qquad +\sum_{k\geq 0} c(\W^L{(s)},k)\left|F^L_{k}(\eta{(s)})-f_k{(s)}\right|+\frac{2C (\W^L(s))^2}{L}\Bigg)\\
		&\leq 2||g||_{\infty} \Bigg(   4C\W^L(s) \sum_{k \geq 1} k\left|F^L_{k}(\eta{(s)})-f_{k}{(s)}\right|
		+C \W^L(s)\left|F^L_{0}(\eta(s))-f_0{(s)} \right|+\frac{2C (\W^L (s))^2}{L}\Bigg)  {\ .}
	\end{align*}
	Notice that for all $M>0,$   {$s\leq t,$} we have
	\begin{align*}
		&\E^L \bigg[ \W^L{(s)}\sum_{k \geq 1} k\left|F^L_{k}(\eta{(s)})-f_{k}{(s)}\right| \bigg] \\
  & \quad = \E^L \bigg[ \W^L{(s)}\sum_{k \geq 1} k\left|F^L_{k}(\eta{(s)})-f_{k}{(s)}\right| \Big( \mathbbm{1}  \{ \W^L{(s)}\leq M \} +\mathbbm{1} \{ \W^L(s)>M \}\Big)\bigg]  \\ 
        & \quad \leq M\E^L \bigg[\sum_{k \geq 1} k\left|F^L_{k}(\eta{(s)})-f_{k}{(s)}\right|  \bigg] +  2\rho\sup_{L\geq 2,  s \leq t}\E^L \big[ \W^L(s)\mathbbm{1} \{ \W^L(s)>M\} \big]\ .
	\end{align*}
An analogous estimate holds for $\E^L \Big[ \W^L(s) \left|F^L_{0}(\eta (s))-f_{0}(s)\right| \Big]$ 
and with \eqref{sec_mom_boundN} we find:
	\begin{multline*}
		\int_0^{t} \E^L \left[\left| \hat{\Lcal}_s g(\W^L(s)) -  \frac{L-1}{L} \hat{\Lcal}_{\eta(s)} g(\W^L(s))\right|\right] ds  \\   \leq  2||g||_{\infty}   \Bigg( 4CM  \int_0^t \E^L \bigg[\sum_{k \geq 1} k\left|F^L_{k}(\eta(s))-f_{k}(s)\right|  \bigg] ds  \\   + 2t  C \left( 1+4\rho \right) \sup_{L\geq 2,s \leq t}\E^L \big[ \W^L(s) \mathbbm{1} \{ \W^L(s)>M\} \big] 
  \\ 
  +CM  \int_0^t \E^L \bigg[  \left|F^L_{  {0}}(\eta(s))-f_{  {0}}(s) \right| \bigg] ds+\frac{2C  { \big(\alpha_4+C_2 t\big) e^{C_2 t}} }{L}t \Bigg)   {\ .}  
	\end{multline*}
	In the limit $L\to \infty$, based on Theorem \ref{thmfactorization}, we have that
	$$
	\int_0^t \E^L \bigg[\sum_{k \geq 1} k\left|F^L_{k}(\eta(s))-f_{k}(s)\right|  \bigg] ds \to 0\quad\mbox{and}\quad \int_0^t \E^L \bigg[\left|F^L_{0}(\eta(s))-f_{0}(s)\right|  \bigg] ds \to 0   {\ .}
	$$
	Therefore, for all $M>0,$
	\begin{multline*}
		  {\limsup \limits_{L\to \infty}}\int_0^t \E^L \left[\left| \hat{\Lcal}_s g(\W^L(s)) -  \frac{L-1}{L} \hat{\Lcal}^L_{\eta(s)} g(\W^L(s))\right|\right] ds   \\   \leq  4\|g\|_{\infty}t    C \left( 1+4\rho \right) \sup_{L\geq 2,s\leq t}  \E^L \big[ \W^L(s) \mathbbm{1} \{ \W^L(s)>M\} \big]\ .
	\end{multline*}
	In the limit $M\to \infty$, the uniform integrability of $\{\W^L(s)\}_{L\geq 2,s\leq t}$ due to relation \eqref{sec_mom_boundN}, gives \eqref{conv_to_zero}.

\section*{Acknowledgements}
The authors are grateful to Michail Loulakis for useful discussions, and to the anonymous referee whose detailed comments helped to improve the paper.

\bibliographystyle{amsplain}
\bibliography{ref_new}

\appendix

\clearpage

\section*{Appendix}

\section{Proof of Theorem \ref{thmfactorization}\label{app:a}}
Here, we present a modification of the proof of Proposition 1 in \cite{grosskinsky2019derivation}, where tightness of the process $\Big(\sum_{k\geq 0} F_k^L (\eta(t) )\, h(k) :t  \geq 0\Big)$ was established for bounded functions $h: \N_0 \to \R$. In our proof, tightness can be established also for Lipschitz functions $h$ and in particular without any assumption on the initial conditions  as stated below.
\begin{proposition}
\label{propexistence}
Consider a process with generator \eqref{eq:GenMis} on the complete graph with sublinear rates (\ref{eq:lip}). For any Lipschitz function $h$, denote by $\mathbb{Q}_h^L$ the measure of the process $t \mapsto H(\eta (t)):=\big\langle F^L (\eta (t)),h\big\rangle$ on path space $D_{[0,\infty)}(\R )$, which is the image measure of $\mathbb{P}^L$ under the mapping $\eta\mapsto \big\langle F^L (\eta),h\big\rangle$. Then $\mathbb{Q}_h^L$ is tight as $L \to \infty$.
\end{proposition}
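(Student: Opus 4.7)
The plan is to verify the standard tightness criterion (Aldous' condition together with compact containment) for the real-valued c\`adl\`ag process $t\mapsto H(\eta(t)):=\langle F^L(\eta(t)),h\rangle$, using the Dynkin martingale decomposition. The key observation is that the Lipschitz property of $h$ combined with the mass conservation bound \eqref{alpha} makes both the drift and the martingale fluctuation of $H(\eta(t))$ easy to control uniformly in $L$.

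First I would note that a Lipschitz function with constant $L_h$ satisfies $|h(k)|\le |h(0)|+L_h k$, so with \eqref{alpha} one gets the deterministic bound $|H(\eta(t))|\le |h(0)|+L_h\rho$ for all $L\ge 2$ and $t\ge 0$. This trivially gives the compact containment condition, since the process takes values in a fixed compact interval of $\R$.

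Next, Dynkin's formula yields
\[
 H(\eta(t))=H(\eta(0))+\int_0^t \mathcal{L}H(\eta(s))\,ds+M^L_t,
\]
where $M^L$ is an $L^2$-martingale. Applying \eqref{eq:GenMis} to $H$, the Lipschitz property gives
\[
|h(\eta_x-1)+h(\eta_y+1)-h(\eta_x)-h(\eta_y)|\le 2L_h,
\]
and combined with the sublinear rate bound \eqref{eq:lip} and $\sum_{x,y}\eta_x(1+\eta_y)=N(L+N)\le \rho(1+\rho)L^2$, one finds a uniform bound $|\mathcal{L}H(\eta)|\le K_1$ with $K_1$ independent of $L$. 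For the martingale part, the carr\'e du champ gives
\[
\Gamma(H,H)(\eta)=\sum_{x\ne y}\frac{c(\eta_x,\eta_y)}{L-1}\bigl(H(\eta^{x\to y})-H(\eta)\bigr)^2,
\]
and since each single-jump increment satisfies $|H(\eta^{x\to y})-H(\eta)|\le 2L_h/L$, an analogous estimate yields $\Gamma(H,H)(\eta)\le K_2/(L-1)$ uniformly in $\eta$. Hence $\mathbb{E}[\langle M^L\rangle_T]\le K_2T/(L-1)\to 0$, and by Doob's inequality $\sup_{t\le T}|M^L_t|\to 0$ in probability.

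Aldous' criterion then follows immediately: for any stopping time $\tau_L\le T$ and $\theta_L\in[0,\delta]$,
\[
 |H(\eta(\tau_L+\theta_L))-H(\eta(\tau_L))|\le |M^L_{\tau_L+\theta_L}-M^L_{\tau_L}|+K_1\delta,
\]
where the martingale difference vanishes in probability as $L\to\infty$ by optional stopping applied to $(M^L)^2-\langle M^L\rangle$, and the drift contribution can be made arbitrarily small by choosing $\delta$ small. Combining with compact containment via the Aldous--Kurtz criterion (see e.g.\ Ethier--Kurtz, Theorem 3.8.6) concludes tightness of $\mathbb{Q}_h^L$ on $D_{[0,\infty)}(\R)$. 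The main point to check carefully is the cancellation in the carr\'e du champ: without the factor $1/L^2$ from squaring the single-jump increment, the martingale term would not vanish, so the Lipschitz hypothesis on $h$ enters crucially both in bounding the drift and in producing this vanishing factor.
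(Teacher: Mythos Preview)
Your argument is correct and follows essentially the same route as the paper: Aldous' criterion via the Dynkin/It\^o decomposition, with compact containment from the Lipschitz bound $|H(\eta)|\le |h(0)|+\|h\|_{\mathrm{Lip}}\rho$, a uniform drift bound $|\mathcal{L}H(\eta)|\le K_1$ from \eqref{eq:lip} and mass conservation, and a carr\'e du champ bound of order $1/L$ coming from the $1/L^2$ factor in the squared single-jump increments. The paper's proof differs only in presentation (it cites Billingsley rather than Ethier--Kurtz and writes out the H\"older/optional-stopping step slightly more explicitly).
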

\begin{proof}
Using a version of Aldous' criterion to establish tightness for $\mathbb{Q}_h^L$ (cf. Theorem 16.10 in \cite{billingsley2013convergence}), it suffices to show that for all $t\geq 0$
	\begin{equation}
		\label{eq:tight1}
		\lim_{a\to \infty}\limsup_{L\to\infty} \mathbb{P}^L \big[ |H(\eta (t))|\geq a\big] =0,
	\end{equation}
	 and that for any $\epsilon>0$, $t>0$,
	\begin{equation}
		\label{eq:tightness}
		\lim_{\delta_0\to 0^+}\limsup_{L\to \infty}\sup_{\delta\leq \delta_0}\sup_{\tau \in \mathfrak{T}_t} {\mathbb{P}}^L\big[ |H(\eta(\tau+\delta))-H(\eta(\tau))|>\epsilon\big] =0,
	\end{equation}
 where $\mathfrak{T}_t$ is the set of stopping times satisfying $\tau\leq t.$\\
Since $h$ is Lipschitz, $|h(k)|\leq |h(0)|+\|h\|_{\text{Lip}}k$ for all $k\in \N_0$ and  $$\big|\big\langle F^L (\eta),h\big\rangle\big|\leq |h(0)|+\|h\|_{\text{Lip}}\rho$$ is uniformly bounded in $L$ and $\eta\in E_{L,N}$, \eqref{eq:tight1} follows easily from Markov's inequality,
\[
\mathbb{P}^L \big[ |H(\eta (t))|\geq a\big]\leq \frac{\mathbb{E}^L \big[ |H(\eta (t))|\big]}{a}\leq \frac{|h(0)|+\|h\|_{\text{Lip}}\rho}{a}\quad\mbox{for all }L\geq 2\ .
\]
Now fix $\delta_0> 0$, $\tau \in \mathfrak{T}_t$ and consider $\delta<\delta_0$.  By It\^{o}'s formula, we have for all $u>0$
	\begin{equation}
		\label{eq:itoetaMIM}
  H({\eta}(u+\delta))-H(\eta(u) )=\int_u^{u+\delta} \Lcal H({\eta} (s))\, ds +M_h (u+\delta)-M_h(u)\ ,
	\end{equation}
	where $(M_h (u) : u\geq 0)$ is a martingale with predictable quadratic variation given by integrating the 'carr\'e du champ' operator  
\begin{equation}
\label{eq:mart}
[M_h ](t)=\int_0^t \big[\mathcal{L}H^2-2H\mathcal{L}H\big] ({\eta}(s))ds\ .
\end{equation}
To compute $\Lcal H(\eta )$, we first recall that 
\begin{equation}
H(\eta)=\langle h, F^L({\eta}) \rangle =\sum_{k\geq 0}h(k) \frac{1}{L}\sum_{x\in\Lambda} \delta_{\eta_x,k}= \frac{1}{L}\sum_{x\in\Lambda} h(\eta_x)
\end{equation}
Therefore,
\begin{equation}
\mathcal{L}H({\eta})=
\frac{1}{L}\frac{1}{L-1}\sum_{x\in\Lambda}  \sum \limits_{y\neq x} c(\eta_x,\eta_y) \big[ \left(h(\eta_x-1)-h(\eta_x)\right) + \left(h(\eta_y+1)-h(\eta_y)\right)\big]
\label{compu}
\end{equation}
Thus, for all $\eta \in E_{L,N}$, we have
\begin{align*}
\big|\mathcal{L}H({\eta})\big|&\leq\frac{2C\|h\|_{\text{Lip}}}{L}\frac{1}{L-1}\sum_{x\in\Lambda}  \sum \limits_{y\neq x}  \eta_x (1+\eta_y)\\
&\leq 2C\|h\|_{\text{Lip}}\frac{N}{L}\frac{L+N}{L-1}\leq 4C\|h\|_{\text{Lip}} \rho (1+\rho).
\end{align*}
where  $N=N_L=\sum \limits_{x\in\Lambda} \eta_x$ is the (constant) number of particles. \\ 
Using again Markov's inequality in \eqref{eq:tightness} and replacing $u$ by the bounded stopping time $\tau\leq t,$ we have to bound
	\begin{align}\label{toboundMim}
		\E^L\Big[\big| H(\eta(\tau+\delta))-H(\eta(\tau))\big|\Big]&\leq \E^L \Big[\int \limits_{\tau}^{\tau+\delta}  |{\Lcal} H(\eta(s))|\,ds\Big] +  \E^L \big[ \left(M_h (\tau+\delta)-  M(\tau) \right)^2\big]^{1/2} \ \nonumber \\
        &\leq \delta_0   \left( 4C\|h\|_{\text{Lip}} \rho (1+\rho)\right) + \E^L \big[  [M_h ](\tau+\delta)-  [M_h ](\tau)\big]^{1/2}
	\end{align}
	where we used H\"older's inequality and the stopping time theorem for the martingale $M_h^2 (t)-[M_h ](t)$. 
Then, to control the last term of \eqref{toboundMim}, it suffices to bound (uniformly in $L$ and $\eta$) the 'carr\'e du champ' operator, for which we have 
\begin{align*}
\big[\mathcal{L}H^2-2H\mathcal{L}H\big] ({\eta})&=\frac{1}{L^2}\frac{1}{L-1}\sum_{\overset{x,y\in\Lambda}{y\neq x}}  c(\eta_x,\eta_y) \big[ \left(h(\eta_x-1)-h(\eta_x)\right) + \left(h(\eta_y+1)-h(\eta_y)\right)\big]^2 \nonumber \\ 
&\leq\frac{2}{L^2}\frac{1}{L-1}\sum_{\overset{x,y\in\Lambda}{y\neq x}} c(\eta_x,\eta_y) \big[ \left(h(\eta_x-1)-h(\eta_x)\right)^2 + \left(h(\eta_y+1)-h(\eta_y)\right)^2 \big]\nonumber\\
&\leq 4C \|h\|^2_{\text{Lip}}\,\frac{1}{L}\frac{N}{L}\frac{L+N}{L-1}\leq \frac{8C\|h\|^2_{\text{Lip}}\rho(1+\rho)}{L}
\end{align*}
uniformly in $\eta \in E_{L,N}$. Therefore, 
\begin{equation}\label{mimprove}
\E^L \big[  [M_h ](\tau+\delta)-  [M_h ](\tau)\big]=\E^L \bigg[ \int_{\tau}^{\tau+\delta} \big[\mathcal{L}H^2-2H\mathcal{L}H\big] ({\eta}(s))ds \bigg]\leq \delta_0  \frac{8C\|h\|^2_{\text{Lip}}\rho(1+\rho)}{L}    
\end{equation}
which vanishes as $\delta_0\to0^+$, finishing the proof.
\end{proof}

Note that with \eqref{mimprove} the martingale $(M_h (u):u\geq 0)$ vanishes also as $L\to\infty$ on arbitrary compact time intervals, which implies that a generalized version also of the main result in \cite{grosskinsky2019derivation} holds as formulated in Theorem \ref{thmfactorization}.\\


\section{Proof of \eqref{toshowii}}\label{app:ii}

By the law of total probability we have
\begin{align*}
    &\P_{\eta[0,T]} \left( \text{at least two } \bar{\W}^L-\text{jumps  in } [s,s+\delta) \right)\\
    &\quad=\sum \limits_{n=1}^{\infty} \P_{\eta[0,T]} \left( \text{at least two } \bar{\W}^L-\text{jumps  in } [s,s+\delta) \big| \bar{\W}^L(s)=n \right) \P_{\eta[0,T]} \left(\bar{\W}^L(s)=n \right)\ .
\end{align*}
We consider the following stopping times:
\begin{itemize}
    \item $\tau^L_1:=\inf\{t\geq s: \; \bar{\W}^L(t)>\bar{\W}^L(s)\}$, time of first jump of $\bar{\W}^L$ after $t=s$.
    \item $\tau^L_2:=\inf\{t\geq \tau_1: \; \bar{\W}^L(t)>\bar{\W}^L(\tau_1)\}$,
     time of second jump of $\bar{\W}^L$ after $t=s$.
\end{itemize}
Then the required probability is rewritten as:
\begin{equation*}
     \P_{\eta[0,T]} \left( \text{at least two } \bar{\W}^L-\text{jumps  in } [s,s+\delta) \big| \bar{\W}^L(s)=n \right) =\P_{\eta[0,T]} \left( \tau^L_2<s+\delta \big| \bar{\W}^L(s)=n \right)\ .
\end{equation*}
Therefore, we have
\begin{equation}\label{integral}
    \P_{\eta[0,T]} \left( \tau^L_2<s+\delta \big| \bar{\W}^L(s)=n \right)=\int \limits_{s}^{s+\delta} \P_{\eta[0,T]} \left( \tau^L_2<s+\delta \big| \bar{\W}^L(s)=n, \tau^L_1=t \right)f_n(t)dt\ ,
\end{equation}
where $f_n(t), \; t\geq s$ is the p.d.f of $\tau^L_1,$ (conditioned on $\eta_{ [0,T]}$ and $\bar{\W}^L(s)=n$) i.e. the p.d.f. of a (shifted) exponential random variable with rate equal to the total exit rate:
$$
r_n(t)=\bar{C}n+2C\sum \limits_{k\geq 0}(1+k)F^L_k(\eta(t))=\bar{C}n+ 2C(1+\sfrac{N}{L})\ , \quad t\geq s\ ,$$
which gives 
$$
f_n(t)=(\bar{C}n+ 2C(1+\sfrac{N}{L}))e^{-(\bar{C}n+ 2C(1+\sfrac{N}{L}))(t-s)}\ , \quad t\geq s\ .
$$
By the law of total probability, the right side of \eqref{integral} equals:
\begin{align*}
  &\sum \limits_{k\geq 0} \int \limits_{s}^{s{+}\delta} \P_{\eta[0,T]} \left( \tau^L_2 {<}s{+}\delta \big|  \tau^L_1 {=}t, \bar{\W}^L(t){=}2n{+}k \right) \P_{\eta[0,T]} \left( \bar{\W}^L(t){=}2n{+}k  \big| \bar{\W}^L(s){=}n, \tau^L_1 {=}t\right)  f_n(t)dt\\
  &+ \int \limits_{s}^{s+\delta} \P_{\eta[0,T]} \left( \tau^L_2{<}s{+}\delta \big|  \tau^L_1{=}t, \bar{\W}^L(t){=}n{+}1 \right) \P_{\eta[0,T]} \left( \bar{\W}^L(t){=}n{+}1  \big| \bar{\W}^L(s){=}n, \tau^L_1{=}t\right)  f_n(t)dt\ .
\end{align*}
Regarding the  terms with long jumps in the first line we have:
\begin{itemize}
    \item $\P_{\eta[0,T]} \left( \tau^L_2<s+\delta \big|  \tau_1=t, \bar{\W}^L(t)=2n+k \right)$\\
    Under the above conditional measure, $\tau^L_2$ follows a (shifted) exponential distribution with rate equal to the total exit rate: 
    \begin{equation*}
    r_{n+2k}(u)=\bar{C}(2n+k)+2C\sum \limits_{k\geq 0}(1+k)F^L_k(\eta(u))=\bar{C}(2n+k)+2C(1+\sfrac{N}{L}) 
    \end{equation*}
      for all $u\in [t, \infty)$.  Therefore,  for all $s\leq t \leq s+\delta,$
    \begin{align*}\P_{\eta[0,T]} \left( \tau^L_2<s+\delta \big|  \tau^L_1=t, \bar{\W}^L(t)=2n+k \right)&=1- e^{-\left(\bar{C}(2n+k)+2C(1+\sfrac{N}{L})  \right)(s+\delta-t)}\\
    &\leq \left( \bar{C}(2n+k)+2C(1+\sfrac{N}{L})  \right)(s+\delta-t) \\
    & \leq \bar{C}(2n+k+1) (s+\delta-t)
    \end{align*}
    \item $\displaystyle\P_{\eta[0,T]} \left( \bar{\W}^L(t)=2n+k  \big| \bar{\W}^L(s)=n, \tau^L_1=t\right)= \frac{2C(1+k)F^L_{k}(\eta(t))}{\bar{C}n+ 2C(1+\sfrac{N}{L}) }\ .$
\end{itemize}
In total, using that $\bar{C}:=2C(1+3\rho)>2C(1+\rho)$, we have 
\begin{align*}
    &\sum \limits_{k\geq 0} \int \limits_{s}^{s+\delta} \P_{\eta[0,T]} \left( \tau^L_2<s+\delta \big|  \tau_1=t, \bar{\W}^L(t)=2n+k \right) \P_{\eta[0,T]} \left( \bar{\W}^L(t)=2n+k  \big| \bar{\W}^L(s)=n, \tau^L_1=t\right)  f_n(t)dt\\
    &\quad \leq  \sum \limits_{k\geq 0} \int \limits_{s}^{s+\delta} \bar{C}(2n+k+1) (s+\delta-t) \frac{2C(1+k)F^L_{k}(\eta(t))}{\bar{C}n+ 2C(1+\sfrac{N}{L})} (\bar{C}n+ 2C(1+\sfrac{N}{L}))e^{-(\bar{C}n+ 2C(1+\sfrac{N}{L}))(t-s)}dt\\
     &\quad \leq  \sum \limits_{k\geq 0} \int \limits_{0}^{\delta}  \bar{C}(2n+k+1)(\delta-u) 2C(1+k)F^L_{k}(\eta(u+s))du \\
     &\quad \leq \delta  \bar{C}^2 \int \limits_{0}^{\delta}  \sum \limits_{k\geq 0} (1+k)^2F^L_{k}(\eta(u+s))du +2 \delta  \bar{C}n \int \limits_{0}^{\delta} 2C \sum \limits_{k\geq 0} (1+k)F^L_{k}(\eta(u+s))du \\
     &\quad \leq \delta  \bar{C}^2\int \limits_{0}^{\delta}  \sum \limits_{k\geq 0} (1+k)^2F^L_{k}(\eta(u+s))du + 2\delta^2  \bar{C}^2n  \ .
\end{align*}
Regarding the term with a one-step jump in the second line we have:
\begin{itemize}
    \item $\P_{\eta[0,T]} \left( \tau^L_2<s+\delta \big|  \tau^L_1=t, \bar{\W}^L(t)=n+1 \right)$\\
    Under the above conditional measure, $\tau^L_2$ follows a (shifted) exponential distribution with rate equal to the total exit rate: 
    \begin{equation*}
    r_{n+1}(u)=\bar{C}(n+1)+2C\sum \limits_{k\geq 0}(1+k)F^L_k(\eta(u))=\bar{C}(n+1)+2C(1+\sfrac{N}{L}) \ ,
    \end{equation*}
    for all $u\in [t, \infty)$. Therefore, 
    \begin{align*}
    \P_{\eta[0,T]} \left( \tau^L_2<s+\delta \big|  \tau^L_1=t, \bar{\W}^L(t)=n+1 \right)&=1- e^{-\left( \bar{C}(n+1)+2C(1+\sfrac{N}{L})\right)(s+\delta-t)}\\
    &\leq \left( \bar{C}(n+1)+2C(1+\sfrac{N}{L})\right)(s+\delta-t)\\
    & \leq \bar{C} (n+2)(s+\delta-t)
    \end{align*}
    \item $\displaystyle\P_{\eta[0,T]} \left( \bar{\W}^L(t)=n+1  \big| \bar{\W}^L(s)=n, \tau^L_1=t\right)= \frac{\bar{C}n}{\bar{C}n+ 2C(1+\sfrac{N}{L})} $  \ .
\end{itemize}
In total, we have: 
\begin{align*}
    &\int \limits_{s}^{s+\delta} \P_{\eta[0,T]} \left( \tau^L_2<s+\delta \big|  \tau^L_1=t, \bar{\W}^L(t)=n+1 \right) \P_{\eta[0,T]} \left( \bar{\W}^L(t)=n+1  \big| \bar{\W}^L(s)=n, \tau^L_1=t\right)  f_n(t)dt\\
    &\leq \int \limits_{s}^{s+\delta} \bar{C} (n+2)(s+\delta-t)\frac{\bar{C}n}{\bar{C}n+ 2C(1+\sfrac{N}{L})} \left(\bar{C}n+ 2C(1+\sfrac{N}{L})\right)e^{-(\bar{C}n+ 2C(1+\sfrac{N}{L}))(t-s)}dt\\
    &\leq  \int \limits_{0}^{\delta}  \bar{C} (n+2)(\delta-u)  \bar{C}ndu \leq \delta^2 \bar{C}^2 n(n+2)\ .
\end{align*}
Combining the above,
\begin{align*}
    &\P_{\eta[0,T]} \left( \text{at least two } \bar{\W}^L-\text{jumps  in } [s,s+\delta) \right)\\
    &\quad\leq \sum \limits_{n=1}^{\infty} \P_{\eta[0,T]} \left( \text{at least two } \bar{\W}^L-\text{jumps  in } [s,s+\delta) \big| \bar{\W}^L(s)=n \right) \P_{\eta[0,T]} \left(\bar{\W}^L(s)=n \right) \\
    &\quad = \delta  \bar{C}^2\int \limits_{0}^{\delta}  \sum \limits_{k\geq 0} (1+k)^2F^L_{k}(\eta(u+s))du + 2\delta^2  \bar{C}^2 \E_{\eta[0,T]} \bigl[\bar{\W}^L(s) \bigr] \\
    &\qquad + \delta^2 \bar{C}^2\E_{\eta[0,T]} \bigl[\bar{\W}^L(s)(\bar{\W}^L(s)+2) \bigr]  
\end{align*}
Therefore, for all $s\in [0,T]$, we have:
\begin{align*}
    \P&\left( \text{at least two } \bar{\W}^L-\text{jumps  in } [s,s+\delta) \right) \\ &\quad \leq \delta  \bar{C}^2\int \limits_{0}^{\delta} \E \Bigl[ \sum \limits_{k\geq 0} (1+k)^2F^L_{k}(\eta(u+s)) \Bigr]du + 2\delta^2  \bar{C}^2 \E \bigl[\bar{W}^L(s) \bigr] \\&\qquad+ \delta^2 \bar{C}^2 \E \bigl[\bar{\W}^L(s)(\bar{W}^L(s)+2) \bigr]  \\
    & \quad\leq \delta  \bar{C}^2\int \limits_{0}^{\delta} (1+2\rho+m^L_2(u+s))du + 5\delta^2  \bar{C}^2  \bar{m}^L_2(s)
\end{align*}
Based on Proposition \ref{gwall} and assumption \eqref{initialcon0c}, we have for all $u\in [0,\delta],\; s\in[0,T]$
$$
m^L_2(u+s)\leq B_2e^{B_2(u+s)}\leq B_2e^{B_2(T+\delta)}
$$
and from Lemma \ref{mombarN},
$$
\bar{m}^L_2(s)\leq\bar{m}^L_2(T)\leq \alpha_4e^{e^{D_2T}}\, .
$$
Therefore,
\begin{multline*}
    \delta^{-1}\limsup \limits_{L\to \infty}  \sup \limits_{0\leq s \leq T} \P ( \text{at least two } \bar{W}^L-\text{jumps  in } [s,s+\delta))\\ \leq \delta  \bar{C}^2  \left( 1+2\rho+B_2e^{B_2(T+\delta)} +5\alpha_4e^{e^{D_2T}} \right) \to 0\quad\mbox{as }\delta\to 0\, .
\end{multline*}

\section{Justification of \eqref{toshowa}}\label{sec:michalis}

Following \cite{armendariz2015metastability}, Section 8, in order to establish \eqref{eq:marti} we need to show that for any $T>0$ 
\begin{equation}\label{MG}
	\E^{\mathbb{Q}}\left[f\left((\omega (u): 0\leq u \leq s)\right) \left(g(\omega(t))-g(\omega (s))-\int_s^t \hat{\Lcal}_{  {u}} g(\omega(u))du\right)\right] =0
\end{equation}
for all $0\leq s\leq t \leq T$ and continuous bounded functions $f: D_{[0, T]} (\N ) \to \R$. Notice that since $T>0$ is arbitrary, this implies Theorem \ref{mainthm}. {Based on tightness estimates in the proof of Proposition \ref{tight\W},}  Lemma 8.1 in \cite{armendariz2015metastability} implies that, as $L\to \infty$, 
\begin{multline}
	\E^{\mathbb{Q}^L}\left[f\left((\omega (u): 0\leq u \leq s)\right) \left(g(\omega (t))-g(\omega (s))-\int_s^t \hat{\Lcal}_{  {u}} g(\omega (u))du\right)\right] \to \\
	\E^{\mathbb{Q}}\left[f\left((\omega (u): 0\leq u \leq s)\right) \left(g(\omega(t))-g(\omega (s))-\int_s^t \hat{\Lcal}_{  {u}} g(\omega(u))du\right)\right]\ .
\end{multline}
Therefore, in order to prove \eqref{MG}, it suffices to prove that 
\begin{equation}
	\E^{L}\left[ \left|g(\W^L(t))-g(\W^L (s))-\int_s^t \hat{\Lcal}_{  {u}} g(\W^L(u))du\right|\right] \to 0\quad  {\mbox{as }L\to\infty}\ ,
\end{equation}
since $\mathbb{Q}^L$ is the law of the process $\big(W^L (t): t\geq 0 \Big)$. 
Since $\Big((\eta(t),W^L (t)): t\geq 0 \Big)$ is a Markov process, we know that 
the process
\begin{multline*}
	g(\W^L(t))-g(\W^L(0))-\int_0^t \hat{\Lcal}^L_{\eta(s)} g(\W^L(s))\, ds \\
 =g(\W^L(t))-g(\W^L(0))-\int_0^t \hat{\Lcal}_s g(\W^L(s))+ \int_0^t \left( \hat{\Lcal}_s g(\W^L(s)) -\hat{\Lcal}^L_{\eta(s)} g(\W^L(s))\right)ds
\end{multline*}
is a $\mathbb{P}^L$-martingale for all $L\geq 2$. Therefore it suffices to prove \eqref{toshowa}.

\end{document}